\numberwithin{equation}{section}
\theoremstyle{plain}
\newtheorem{Th}{Theorem}[section]
\newtheorem{Lemma}[Th]{Lemma}
\newtheorem{Cor}[Th]{Corollary}
\newtheorem{Prop}[Th]{Proposition}
 \theoremstyle{definition}
\newtheorem{Rem}[Th]{Remark}
\newtheorem{?}[Th]{Problem}
\begin{document}

\title{$\beta$-High Resolution ODE and Phase Transition between NAG-SC and Heavy Ball Method}

\author[Da Wu]{Da Wu}

\address{University of Pennsylvania \\ Department of Mathematics\\ David Rittenhouse Lab \\209 South 33rd Street\\ Philadelphia, PA 19104-6395} 

\email{dawu@math.upenn.edu}

 \subjclass[2010]{}

 \keywords{Optimization}

\begin{abstract}
In this paper, we study the convergence properties of an algorithm that can be viewed as an interpolation between two gradient based optimization methods, Nesterov's acceleration method for strongly convex functions $(NAG$-$SC)$ and Polyak's heavy ball method. Recent Progress \cite{HRODE} has been made on using \textit{High-Resolution} ordinary differential equations (ODEs) to distinguish these two fundamentally different methods. The key difference between them can be attributed to the \textit{gradient correction term}, which is reflected by the Hessian term in the High-Resolution ODE. Our goal is to understand how this term can affect the convergence rate and the choice of our step size. To achieve this goal, we introduce the notion of \textit{$\beta$-High Resolution ODE}, $0\leq \beta\leq 1$ and prove that within certain range of step size, there is a phase transition happening at $\beta_c$. When $\beta_c\leq\beta\leq 1$, the algorithm associated with $\beta$-High Resolution ODE have the same convergence rate as NAG-SC. When $0\leq \beta\leq \beta_c$, this algorithm will have the slower convergence rate than NAG-SC. 
\end{abstract}

\maketitle
\tableofcontents
\section{Introduction}
\subsection{Overview}
In modern machine learning and (convex) optimization, we are interested in efficiently finding the minimizer of a smooth convex function $f:\mathbb R^n\to \mathbb R$, i.e.
\begin{equation}
	\min_{x\in \mathbb R^n} f(x)
\end{equation}
 There are several ways of solving this unconstrained optimization problem, among which the simplest and most straightforward method is \textit{gradient descent}. For any initial point $x_0\in \mathbb R^n$, we update our $x_k$ by the following recursive rule, 
\begin{equation}\label{gradient descent}
	x_{k+1}=x_k-s\nabla f(x_k)
\end{equation}
where $s>0$ is a fixed step size. Significant amount of work has been devoted to improve $(\ref{gradient descent})$ afterwards. Polyak in \cite{Pol1}, \cite{Pol2} introduced the following \textit{heavy ball method}. For any two initial points $x_0,x_1\in \mathbb R^n$, we iteratively update our $x_k$ by 
\begin{equation}\label{heavy ball method}
	x_{k+1}=x_k+\alpha(x_k-x_{k-1})-s\nabla f(x_k)
\end{equation}
where $s>0$ is the step size, $\alpha>0$ is called the \textit{momentum coefficient}. Heuristically, at each step, we accelerate the minimizing process by giving a momentum from the previous two steps. The main advantage of this method is the faster local convergence rate near the minimum of $f$. 
\vskip0.05cm%
It turns out that we can do better. Nesterov discovered the \textit{accelerated gradient method}, see \cite{Nest1}, \cite{Nest2} for details.  For (weakly) convex function $f$ (called NAG-C), NAG-C takes the form
\begin{align}
	\begin{split}
		y_{k+1} &=x_k-s\nabla f(x_k)\\
		x_{k+1} &=y_{k+1}+\frac{k}{k+3}(y_{k+1}-y_k)
	\end{split}
\end{align}
 with $x_0=y_0\in \mathbb R^n$.For $\mu$-strongly convex and $L$-Lipschitz function $f$ (called NAG-SC), NAG-SC takes the following form
\begin{align}\label{original NAG-SC}
\begin{split}
	y_{k+1} &=x_k-s\nabla f(x_k)\\
	x_{k+1} &=y_{k+1}+\frac{1-\sqrt{\mu s}}{1+\sqrt{\mu s}}(y_{k+1}-y_k)
\end{split}
\end{align}
with $x_0=y_0\in \mathbb R^n$ as initial data points.(all the terms above will be defined in the next section) Plugging the $y_k$ and $y_{k+1}$ into the second line and we get 
\begin{equation}\label{single variable formula for NAG-SC}
	x_{k+1}=x_k+\left(\frac{1-\sqrt{\mu s}}{1+\sqrt{\mu s}}\right)(x_k-x_{k-1})-s\nabla f(x_k)-\left(\frac{1-\sqrt{\mu s}}{1+\sqrt{\mu s}}\right)s\left(\nabla f(x_k)-\nabla f(x_{k-1}) \right)
\end{equation}
 with $x_0$ and $x_1=x_0-\frac{2s\nabla f(x_0)}{1+\sqrt{\mu s}}$. If we compare $(\ref{single variable formula for NAG-SC})$ with $(\ref{heavy ball method})$, $(\ref{single variable formula for NAG-SC})$ is just the $(\ref{heavy ball method})$ with momentum coefficient $\alpha=\frac{1-\sqrt{\mu s}}{1+\sqrt{\mu s}}$ and an additional term 
 \begin{equation}\label{gradient correction term}
 	\left(\frac{1-\sqrt{\mu s}}{1+\sqrt{\mu s}}\right)s\left(\nabla f(x_k)-\nabla f(x_{k-1}) \right)
 \end{equation}
 This term is called the \textit{gradient correction term}. Mathematically, we want to understand why this term $(\ref{gradient correction term})$ gives a faster convergence rate.
 \vskip0.1cm%
 Recently, the work of B.Shi, S.Du, M.Jordan and W.Su \cite{HRODE} provides an \textit{High-Resolution ODE} approach to unravel the mystery of the gradient correction term. The crucial point in their approach is that when deriving the ODE, we take the step size $s$ small but non-vanishing. Here, we recall that High-Resolution ODE of heavy-ball method is 
 \begin{equation}\label{HR ODE for heavy ball method}
 	\ddot X(t)+2\sqrt{\mu}\dot X(t)+(1+\sqrt{\mu s})\nabla f(X(t))=0
 \end{equation}
 and the High-Resolution ODE of NAG-SC is 
 \begin{equation}\label{HR ODE for NAG-SC}
 	\ddot X(t)+2\sqrt{\mu}\dot X(t)+\sqrt{s}\nabla^2 f(X(t))\dot X(t)+(1+\sqrt{\mu s})\nabla f(X(t))=0
 \end{equation}
 If we simply take the step size $s\to 0$, then both heavy ball method and NAG-SC will have the same limiting ODE (see \cite{DiffM} and \cite{HRODE} for a more detailed discussion)
 \begin{equation}
 	\ddot X(t)+2\sqrt{\mu}\dot X(t)+\nabla f(X(t))=0
 \end{equation}
We can see that the only difference between $(\ref{HR ODE for heavy ball method})$ and $(\ref{HR ODE for NAG-SC})$ is the $\sqrt{s}\nabla^2 f(X(t))\dot X(t)$. In order to better understand how this term would make a difference on convergence rate and step size, we consider the so-called $\beta$ High-Resolution ODE, 
\begin{equation}
	\ddot X(t)+2\sqrt{\mu}\dot X(t)+\beta\sqrt{s}\nabla^2 f(X(t))\dot X(t)+(1+\sqrt{\mu s})\nabla f(X(t))=0,\qquad 0\leq\beta\leq 1
\end{equation}
Its corresponding discrete counterpart 
\begin{align*}
		y_{k+1} &=x_k-s\nabla f(x_k)\nonumber \\
		y_{k+1}^\beta &=x_k-\beta s\nabla f(x_k)\\
		x_{k+1} &=y_{k+1}+\frac{1-\sqrt{\mu s}}{1+\sqrt{\mu s}}\left(y_{k+1}^{\beta}-y_{k}^{\beta} \right)\nonumber
	\end{align*}
can be viewed as an interpolation between NAG-SC and heavy ball method. (see Section $2$ for a detailed derivation) 
\vskip0.1cm%
The main objective of this paper is to understand the "cutoff" point of the convergence rate of this generalized class of algorithm when $\beta$ continuously vary from $1$ to $0$. Suppose $\beta$ is negligible, the Hessian term only contributes a little "acceleration". Hence it cannot achieve the same convergence rate as NAG-SC. Similarly, suppose $\beta$ is very close to $1$, it is essentially NAG-SC, which should give us a faster convergence rate than heavy ball method. To start, we first introduce some basic definitions.       \subsection{Notation and Basic Setup}
Let $\mathcal F^1_L(\mathbb R^n)$ denote the class of $L$-smooth convex functions defined on $\mathbb R^n$, that is, $f\in \mathcal F_L^1$ if $f(y)\geq f(x)+\langle \nabla f(x), y-x\rangle$ for all $x,y\in \mathbb R^n$. Its gradient is $L$-Lipschitz continuous in the sense that 
\begin{equation*}
	\|\nabla f(x)-\nabla f(y)\|\leq L\|x-y\|
\end{equation*} 
where $\|\cdot\|$ denotes standard Euclidean norm and $L>0$ is the Lipschitz constant. The function class $\mathcal F^2_L(\mathbb R^n)$ denotes the subclass of $\mathcal F^1_L(\mathbb R^n)$ such that each $f$ has a Lipschitz continuous Hessian in the sense that 
\begin{equation*}
	\left\|\nabla^2 f(x)-\nabla^2 f(y)\right\|_{F}\leq L'\|x-y\|
\end{equation*}
where $\|\cdot\|_F$ denotes the Frobenius norm and $L'>0$ is an arbitrary constant. For $p=1,2$, let $\mathcal S_{\mu,L}^p(\mathbb R^n)$ denote the subclass of $\mathcal F_L^p(\mathbb R^n)$ such that each member $f$ is $\mu$-strongly convex for some $0<\mu\leq L$. That is, $f\in \mathcal S_{\mu,L}^p(\mathbb R^n)$ if $f\in \mathcal F_L^p(\mathbb R^n)$ and 
\begin{equation*}
	f(y)\geq f(x)+\langle \nabla f(x),y-x\rangle+\frac{\mu}{2}\|y-x\|^2
\end{equation*} 
for all $x,y\in \mathbb R^n$. This is equivalent to the convexity of $f(x)-\frac{\mu}{2}\|x-x^*\|^2$, where $x^*$ is the minimizer of the objective $f$. Now, we are ready to state the main result.
\subsection{Statement of the Main Result}
\begin{Th}\label{Main Theorem}
	Let $f\in \mathcal S_{\mu,L}^{1}(\mathbb R^n)$. If the step size $s$ satisfies $\frac{25\mu}{(12L-\mu)^2} \leq s=\frac{1}{cL} \leq \frac{1}{4L}\ \ (\text{equivalently,}\  4\leq c\leq \frac{(12L-\mu)^2}{25\mu L}
	 , \text{$c$ may possibly depend on $\mu,L$})$, then there exists a $\beta_c=\beta_c(\mu,L,s)\in [0,1)$ such that when $0\leq\beta\leq \beta_c$, 
	\begin{align}\label{Convergence Rate for beta-High Resolution method}
		f(x_k)-f(x^*)\leq O\left( \frac{L\cdot\|x_0-x^*\|^2 }{\left\lbrace 1+\frac{\frac{\beta^2-\beta}{c^2}\left(\frac{\mu}{L}\right)+\left(\frac{1}{\sqrt{c}}-\frac{3+\beta^2-2\beta}{c\sqrt{c}}\right)\sqrt{\frac{\mu}{L}}-\frac{2-2\beta}{c}}{\frac{\beta^2}{2c^2\sqrt{c}}\left(\frac{\mu}{L}\right)^{3/2}-\left(\frac{1}{c}+\frac{\beta^2}{c^2}\right)\frac{\mu}{L}+\left(\frac{1}{\sqrt{c}}+\frac{\beta^2}{2c\sqrt{c}}\right)\sqrt{\frac{\mu}{L}} }\right\rbrace^k}\right)
	\end{align}
	when $\beta_c\leq \beta\leq 1$, 
	\begin{align}\label{previous result}
		f(x_k)-f(x^*)\leq O\left(\frac{L\cdot\|x_0-x^*\|^2}{\left\lbrace 1+\frac{1}{6\sqrt{c}}\sqrt{\frac{\mu}{L}}\right\rbrace^k} \right)
	\end{align}
	$\beta_c$ is computed explicitly in Remark $\ref{Critical Value}$. 
\end{Th}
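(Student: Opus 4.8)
The plan is to follow the Lyapunov-function strategy that underlies the high-resolution ODE analysis of \cite{HRODE}, carried out directly at the discrete level for the $\beta$-scheme. First I would write the three-line $\beta$-algorithm as a single second-order recursion in $x_k$, thereby exposing the momentum coefficient $\frac{1-\sqrt{\mu s}}{1+\sqrt{\mu s}}$ and the partial gradient-correction term $\beta\left(\frac{1-\sqrt{\mu s}}{1+\sqrt{\mu s}}\right)s\bigl(\nabla f(x_k)-\nabla f(x_{k-1})\bigr)$ (the $\beta=1$ case recovers \eqref{single variable formula for NAG-SC}, the $\beta=0$ case recovers \eqref{heavy ball method}). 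Then I would propose a discrete Lyapunov sequence $\mathcal E_k$ of the by-now-standard mixed form, something like
\begin{equation*}
\mathcal E_k = \bigl(1+\tfrac{\sqrt{\mu s}}{\,\cdot\,}\bigr)\bigl(f(x_k)-f(x^*)\bigr) + \tfrac14\bigl\| v_k \bigr\|^2 + \tfrac{\mu}{4}\bigl\| x_k - x^* + \tfrac{1}{\sqrt{\mu}}\, v_k + \beta\sqrt{s}\,\nabla f(x_k)\bigr\|^2 ,
\end{equation*}
where $v_k$ is a suitable velocity proxy (e.g. a rescaled $x_k-x_{k-1}$), and where the coefficients of the cross terms are tuned so that the $\beta$-dependent Hessian-like contribution lands in the right place.

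The core of the argument is then to show $\mathcal E_{k+1} \le \mathcal E_k / (1+r(\beta,c,\mu/L))$ for an explicit rate $r$. Expanding $\mathcal E_{k+1}-\mathcal E_k$ and using $L$-smoothness and $\mu$-strong convexity of $f$ (the inequalities recalled in the Notation subsection: $f(y)\ge f(x)+\langle\nabla f(x),y-x\rangle+\frac\mu2\|y-x\|^2$, together with the co-coercivity/quadratic-upper-bound consequences of $f\in\mathcal F_L^1$), one reduces the per-step decrease to a quadratic form in $\nabla f(x_k)$, $\nabla f(x_{k-1})$, $x_k-x^*$, $v_k$, whose coefficients are rational functions of $\sqrt{\mu s}=\sqrt{\mu/(cL)}$ and of $\beta$. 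Requiring this quadratic form to be negative semidefinite after subtracting $r\,\mathcal E_k$ yields, after collecting powers of $\sqrt{\mu/L}$ and $1/\sqrt c$, exactly the denominator appearing in \eqref{Convergence Rate for beta-High Resolution method}; the step-size window $\frac{25\mu}{(12L-\mu)^2}\le s\le\frac1{4L}$ is precisely what makes the relevant coefficients have the right sign, so that (i) for $\beta$ large the contraction factor can be taken to be the $\beta$-independent $1+\frac{1}{6\sqrt c}\sqrt{\mu/L}$ of \eqref{previous result}, matching NAG-SC, and (ii) for $\beta$ small only the weaker, genuinely $\beta$-dependent factor survives. The crossover value $\beta_c$ is then defined as the threshold in $\beta$ at which the $\beta$-dependent rate in \eqref{Convergence Rate for beta-High Resolution method} equals $\frac{1}{6\sqrt c}\sqrt{\mu/L}$; solving that equation for $\beta$ gives the closed form to be recorded in Remark \ref{Critical Value}, and checking $\beta_c\in[0,1)$ under the stated constraints on $c$ completes the dichotomy. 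Finally, telescoping the contraction from $\mathcal E_k$ down to $\mathcal E_0$ and using $\mathcal E_0 = O\bigl(L\|x_0-x^*\|^2\bigr)$ (which follows from $L$-smoothness and the initialization $x_1 = x_0 - \frac{2s\nabla f(x_0)}{1+\sqrt{\mu s}}$, so $v_0 = O(\sqrt s\,\|\nabla f(x_0)\|) = O(\sqrt{s}L\|x_0-x^*\|)$) converts the per-step bound into the claimed $f(x_k)-f(x^*)=O\bigl(L\|x_0-x^*\|^2/(1+r)^k\bigr)$.

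I expect the main obstacle to be the sign/definiteness bookkeeping in the quadratic form: there are four coupled vector variables and a genuinely two-parameter family of coefficients (in $\beta$ and $c$), so one must choose the Lyapunov cross-term weights and complete the squares in a way that simultaneously (a) absorbs the indefinite $\langle\nabla f(x_k),v_k\rangle$ and $\langle\nabla f(x_k)-\nabla f(x_{k-1}),\cdot\rangle$ cross terms, (b) keeps the surviving $\|\nabla f(x_k)\|^2$ coefficient nonpositive on the \emph{whole} step-size window, and (c) still produces a clean rational expression rather than an intractable one. The requirement $s\le\frac1{4L}$ controls the $\nabla^2 f$-type remainder (it guarantees contraction-type estimates for the gradient map), while the lower bound $s\ge\frac{25\mu}{(12L-\mu)^2}$ is what forces the numerator $\frac{\beta^2-\beta}{c^2}\frac\mu L+\bigl(\frac1{\sqrt c}-\frac{3+\beta^2-2\beta}{c\sqrt c}\bigr)\sqrt{\frac\mu L}-\frac{2-2\beta}{c}$ to remain positive (hence the rate well-defined) across all admissible $\beta$; verifying these two one-sided constraints against the explicit coefficients is the delicate computational heart of the proof. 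Everything else — the algebraic recasting of the scheme, the telescoping, and the initialization estimate — is routine once the Lyapunov functional is correctly calibrated.
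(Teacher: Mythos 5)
Your proposal follows essentially the same route as the paper: recast the scheme as the single recursion \eqref{Discrete Method}, build a discrete Lyapunov functional of exactly the shape of \eqref{beta discrete-time energy functional} (your candidate matches the paper's three leading terms up to the $\frac{1}{1-\sqrt{\mu s}}$ normalizations and the additional negative term $-\frac{\beta s\|\nabla f(x_k)\|^2}{2(1-\sqrt{\mu s})}$ that the paper inserts to absorb the gradient-correction cross terms), derive a per-step contraction of the form $\mathcal E_\beta(k+1)-\mathcal E_\beta(k)\leq -\sqrt{\mu s}\min\{\tfrac16,\,A_\beta/B_\beta\}\,\mathcal E_\beta(k+1)$ as in Lemma \ref{recursive lemma}, define $\beta_c$ as the crossing point of the two rates as in Lemma \ref{Comparison Lemma} and Remark \ref{Critical Value}, and telescope using $\mathcal E_\beta(0)=O(L\|x_0-x^*\|^2)$. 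On all of these points your plan and the paper's proof coincide in substance.

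The one place where your plan diverges from what the paper actually does, and where the verification you describe would fail, concerns the role of the lower bound $s\geq\frac{25\mu}{(12L-\mu)^2}$. You claim it forces the numerator of the subcritical rate in \eqref{Convergence Rate for beta-High Resolution method} to be positive for all admissible $\beta$. That is false: at $\beta=0$ and $c=4$ the numerator reduces to $\frac18\sqrt{\mu/L}-\frac12<0$ since $\mu\leq L$, and no lower bound on $s$ within the stated window repairs this. What the window actually buys in the paper is the sign pattern $h(0)\leq 0\leq h(1)$ together with $h'\geq 0$ for the auxiliary function $h(\beta)$ of Lemma \ref{Comparison Lemma}, i.e.\ the fact that $A_\beta/B_\beta$ crosses the level $\frac16$ exactly once on $[0,1]$, which is what makes $\beta_c$ well defined and places the phase transition inside the admissible range; positivity of $A_\beta$ throughout the subcritical regime is neither used nor true there (so the subcritical bound is a genuine decay estimate only where $A_\beta>0$, a caveat present in the paper as well). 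If you carry out your step (ii) as written --- attempting to prove positivity of that numerator on the whole window --- you will get stuck; you should instead aim at the monotonicity-plus-endpoint-signs argument for $h$.
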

\begin{Rem}[Comparison with the known results]
	In \cite{HRODE} , Theorem $3$, when $s=\frac{1}{4L}$, NAG-SC $(\beta=1)$ gives us a monotone convergence rate of 
	\begin{align}\label{rate of convergence for NAG-SC}
		f(x_k)-f(x^*)\leq O\left(\frac{L\cdot\|x_0-x^*\|^2}{\left(1+\frac{1}{12}\sqrt{\mu/L} \right)^k} \right)
	\end{align}
	In \cite{DiffM} , Theorem $4$, if the step size $s$ is set to be $s=\frac{\mu}{16L^2}$, then the Heavy Ball Method $(\beta=0)$ gives us a monotone convergence rate of 
	\begin{align}\label{rate of convergence for heavy ball method}
		f(x_k)-f(x_0)\leq O \left(\frac{L\cdot \|x_0-x^*\|^2}{\left(1+\frac{\mu}{16L}\right)^k} \right)
	\end{align}
	In our $\beta$-High Resolution Approach, assume $s\propto\frac{1}{L}$, we can see that as $\beta$ decreases from $1$ to $0$, after passing the critical value $\beta_c$, the convergence rate cannot match the $(\ref{rate of convergence for NAG-SC})$ anymore (It slows down). Instead, the denominator is a rational function of $\sqrt{\mu/L}$ as in $(\ref{Convergence Rate for beta-High Resolution method})$. 
\end{Rem}
\section{Derivation of $\beta$-High Resolution ODE}
For variable $\beta\in [0,1]$, define the $\beta$ generalized NAG-SC method to be 
\begin{align}\label{beta discrete method}
		y_{k+1} &=x_k-s\nabla f(x_k)\nonumber \\
		y_{k+1}^\beta &=x_k-\beta s\nabla f(x_k)\\
		x_{k+1} &=y_{k+1}+\frac{1-\sqrt{\mu s}}{1+\sqrt{\mu s}}\left(y_{k+1}^{\beta}-y_{k}^{\beta} \right)\nonumber
	\end{align} 
with initial condition $x_0\in \mathbb R^n$ and $y_0^\beta=\frac{(1-\sqrt{\mu s})x_0-s\nabla f(x_0)\left[(1-\sqrt{\mu s})\beta+\sqrt{\mu s}-1\right]}{1-\sqrt{\mu s}}$. This is equivalent to 
\begin{equation}\label{Discrete Method}
		x_{k+1}=x_k+\frac{1-\sqrt{\mu s}}{1+\sqrt{\mu s}}(x_k-x_{k-1})-s\nabla f(x_k)-\beta\cdot \frac{1-\sqrt{\mu s}}{1+\sqrt{\mu s}}\cdot s(\nabla f(x_k)-\nabla f(x_{k-1}))
	\end{equation}
with initial condition $x_0$ and $x_1=x_0-\frac{2s\nabla f(x_0)}{1+\sqrt{\mu s}}$. Fix a nonnegative integer $k$ and let $t_k=k\sqrt{s}$ and $x_k=X(t_k)$ for some $C^\infty$ curve. Using Taylor expansion with respect to $\sqrt{s}$, we get 
\begin{align}
	x_{k+1} &=X(t_{k+1})=X(t_k)+\dot X(t_k)\sqrt{s}+\frac{1}{2}\ddot X(t_k)\left(\sqrt{s}\right)^2+\frac{1}{6}\dddot X(t_k)\left(\sqrt{s}\right)^3+O\left(\left(\sqrt{s}\right)^4\right)\label{Taylor eq 1} \\
	x_{k-1} &=X(t_{k-1})=X(t_k)-\dot X(t_k)\sqrt{s}+\frac{1}{2}\ddot X(t_k)\left(\sqrt{s}\right)^2-\frac{1}{6}\dddot X(t_k)\left(\sqrt{s}\right)^3+O\left(\left(\sqrt{s}\right)^4\right)\label{Taylor eq 2}
\end{align}
Applying Taylor expansion again to the gradient correction gives us 
\begin{equation}\label{Gradient Difference}
	\nabla f(x_k)-\nabla f(x_{k-1})=\nabla^2 f(X(t_k))\dot X(t_k)\sqrt{s}+O\left(\left(\sqrt{s}\right)^2 \right)
\end{equation}
Multiplying both sides of $(\ref{Discrete Method})$ by $\frac{1+\sqrt{\mu s}}{1-\sqrt{\mu s}}\cdot \frac{1}{s}$ and rearranging the terms, 
	\begin{equation}\label{Single variable discrete}
		\frac{x_{k+1}+x_{k-1}-2x_k}{s}+\frac{2\sqrt{\mu s}}{1-\sqrt{\mu s}}\frac{x_{k+1}-x_k}{s}+\beta\left(\nabla f(x_k)-\nabla f(x_{k-1})\right)+\frac{1+\sqrt{\mu s}}{1-\sqrt{\mu s}}\nabla f(x_k)=0
	\end{equation}
	Plugging $(\ref{Taylor eq 1})$, $(\ref{Taylor eq 2} )$ and $(\ref{Gradient Difference})$ into $(\ref{Single variable discrete})$, we have 
	\begin{align*}
		\ddot X(t_k)+O\left(\left(\sqrt{s}\right)^2\right) &+\frac{2\sqrt{\mu}}{1-\sqrt{\mu s}}\left[\dot X(t_k)+\frac{1}{2}\ddot X(t_k)\sqrt{s}+O\left(\left(\sqrt{s}\right)^2\right) \right]\\
	&+\beta \nabla^2 f(X(t_k))\dot X(t_k)\sqrt{s}+O\left(\left(\sqrt{s}\right)^2 \right)	+\frac{1+\sqrt{\mu s}}{1-\sqrt{\mu s}}\nabla f(X(t_k))=0
	\end{align*}
	After rearranging, 
	\begin{equation*}
		\frac{\ddot X(t_k)}{1-\sqrt{\mu s}}+\frac{2\sqrt{\mu}}{1-\sqrt{\mu s}}\dot X(t_k)+\beta \sqrt{s}\nabla^2 f(X(t_k))\dot X(t_k)+\frac{1+\sqrt{\mu s}}{1-\sqrt{\mu s}}\nabla f(X(t_k))+O(s)=0
	\end{equation*}
	Multiplying both sides by $1-\sqrt{\mu s}$ and by ignoring any $O(s)$ terms but keep $O(\sqrt{s})$ terms, we finally get the $\beta$-High Resolution ODE, 
	\begin{equation}\label{beta}
		\ddot{X}(t)+2\sqrt{\mu}\dot{X}(t)+\beta\sqrt{s}\nabla^2 f(X(t))\dot{X}(t)+(1+\sqrt{\mu s})\nabla f(X(t))=0
	\end{equation}
with $0\leq \beta\leq 1$. The initial conditions of $(\ref{beta})$ throughout this paper are assumed to be $X(0)=x_0$ and $\dot X(0)=-\frac{2\sqrt{s}\nabla f(x_0)}{1+\sqrt{\mu s}}$.\section{Global Existence and Uniqueness of ODE}
Suppose $X_s(t)$ is the solution of $(\ref{beta})$, then by the following Lyapunov function
\begin{equation}\label{Lyapunov Function}
	\mathcal E(t)=(1+\sqrt{\mu s})(f(X_s)-f(x^*))+\frac{1}{2}\left\|\dot X_s\right\|^2
\end{equation}
we can deduce that there exists some $\mathcal C_1>0$ such that
\begin{equation*}
	\sup_{0\leq t<\infty}\left\|\dot X_s(t) \right\|\leq \mathcal C_1
\end{equation*} 
Now, we investigate the global existence and uniqueness of the $\beta$-High Resolution ODE $(\ref{beta})$. Recall that the initial value problem (IVP) for first-order ODE system in $\mathbb R^m$ is 
\begin{equation}\label{First Order ODE}
	\dot x=b(x),\qquad x(0)=x_0
\end{equation}
and the following theorem deals with the global existence and uniqueness of (\ref{First Order ODE})
\begin{Th}[Chillingworth \cite{Perko} , Chapter 3.1, Theorem $4$]
Let $M\in \mathbb R^m$ be a compact manifold and $b\in C^1(M)$. If the vector fields $b$ satisfies the global Lipschitz condition
\begin{equation*}
	\|b(x)-b(y)\|\leq \mathcal L\|x-y\|
\end{equation*}
for all $x,y\in M$. Then for any $x_0\in M$, the IVP $(\ref{First Order ODE})$ has a unique solution $x(t)$ defined for all $t\in \mathbb R$. 
\end{Th}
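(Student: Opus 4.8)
The plan is to reduce the statement to the classical Picard--Lindel\"of theorem together with the continuation principle for ODEs; the one extra observation is that a \emph{global} Lipschitz constant $\mathcal L$ on the \emph{compact} set $M$ makes the local time of existence uniform in the base point, which is exactly what promotes local solutions to global ones. First I would record the two quantities compactness supplies: $B:=\sup_{x\in M}\|b(x)\|<\infty$ (finite because $b$ is continuous on the compact $M$) and the given constant $\mathcal L$; both are independent of $x_0$.

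\emph{Local existence and uniqueness.} Fix $\delta$ with $0<\delta<1/\mathcal L$ (and, working in a tubular neighbourhood of $M$ in $\mathbb R^m$ or in a finite atlas, small enough that the relevant $\delta B$-balls stay inside a single chart). On the complete metric space $\mathcal X:=C\big([-\delta,\delta];M\big)$ with the uniform norm, define the Picard operator $(\Phi y)(t):=x_0+\int_0^t b\big(y(\tau)\big)\,d\tau$. Since $b$ is tangent to $M$, $\Phi$ maps $\mathcal X$ into itself, and the global Lipschitz bound gives $\|\Phi y_1-\Phi y_2\|_\infty\le \mathcal L\,\delta\,\|y_1-y_2\|_\infty<\|y_1-y_2\|_\infty$, so $\Phi$ is a contraction. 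The Banach fixed point theorem then yields a unique $x\in\mathcal X$ solving $x=\Phi x$, i.e. a unique $C^1$ solution of $(\ref{First Order ODE})$ on $[-\delta,\delta]$; the crucial point is that $\delta$ depends only on $\mathcal L$ and $B$, not on $x_0$.

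\emph{Globalization.} Because this local existence time is uniform, I would restart the construction at $x(\delta),x(2\delta),\dots$ and at $x(-\delta),x(-2\delta),\dots$, patching the pieces --- which agree on overlaps by uniqueness --- into a solution on all of $\mathbb R$. Equivalently, let $(T_-,T_+)$ be the maximal interval of existence; were $T_+<\infty$, the standard escape lemma would force $x(t)$ to leave every compact subset of the domain as $t\uparrow T_+$, which is impossible since the whole trajectory lies in the compact set $M$. Hence $T_+=+\infty$ and, symmetrically, $T_-=-\infty$, and uniqueness on $\mathbb R$ is inherited from the local statement by a connectedness argument.

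The only genuine obstacle is carrying out the fixed-point step cleanly on a manifold rather than on an open subset of $\mathbb R^m$: one must keep the Picard iterates on $M$ (or project them back) so that $b$ is defined along them, and verify that $\delta$ can be chosen uniformly. Compactness of $M$ handles both --- it bounds $\|b\|$ and lets one cover $M$ by finitely many charts of uniformly controlled size --- so this is in the end entirely routine, matching \cite{Perko}, Chapter~3.1. For the use we will make of it, it then suffices to exhibit a compact $M$ --- provided by the Lyapunov bound following $(\ref{Lyapunov Function})$ --- on which the first-order reduction of the $\beta$-High Resolution ODE $(\ref{beta})$ is $C^1$ and globally Lipschitz, and invoke the theorem.
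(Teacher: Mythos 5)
The paper does not prove this statement at all: it is imported verbatim as a citation (Perko, Chapter 3.1, Theorem 4) and used as a black box, so there is no in-paper argument to compare yours against. Judged on its own, your proof is the standard and essentially correct one: Picard--Lindel\"of via the contraction $\Phi$, with the key observation that compactness of $M$ and the global Lipschitz constant make the local existence time $\delta$ uniform in the initial point, after which either the restart-and-patch argument or the escape lemma gives existence on all of $\mathbb R$. The one technical wrinkle --- which you correctly flag as the only genuine obstacle --- is that the Picard iterates $x_0+\int_0^t b(y(\tau))\,d\tau$ do not literally stay on a curved $M$ even when $b$ is tangent to it, so one should either extend $b$ to a $C^1$ field on a tubular neighbourhood and then argue that $M$ is invariant under the resulting flow, or run the whole construction in charts; your escape-lemma variant is the cleaner route since it only needs the trajectory (not the iterates) to remain in the compact set. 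One caution about the intended application rather than your proof: the set $M_{\mathcal C_1}=\{(X_s,\dot X_s):\|\dot X_s\|\leq\mathcal C_1\}$ that the paper feeds into this theorem is closed but not compact (it is $\mathbb R^n$ times a closed ball), so if you invoke the theorem as stated you should either note that only boundedness of the velocity component is actually used, or replace compactness by the global Lipschitz estimate on that closed set, which is what the paper's subsequent computation really establishes.
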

\begin{Th}
	For any $f\in \mathcal S_\mu^2(\mathbb R^n):=\cup_{L\geq \mu}\mathcal S^2_{\mu,L}(\mathbb R^n)$, the $\beta$-High Resolution ODE $(\ref{beta})$ with the specified initial conditions has a unique global solution $X\in C^2(I;\mathbb R^n)$.
\end{Th}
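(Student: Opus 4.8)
The plan is to recast the second-order equation $(\ref{beta})$ as a first-order autonomous system, obtain a unique maximal solution from the classical Cauchy–Lipschitz (Picard–Lindelöf) theorem, and then use the a priori bounds coming from the Lyapunov function $(\ref{Lyapunov Function})$ to exclude finite-time blow-up, so that the maximal solution is in fact global.

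First I would set $Z(t)=(X(t),\dot X(t))\in\mathbb R^{2n}$, which turns $(\ref{beta})$ into $\dot Z=b(Z)$ with
\begin{equation*}
	b(X,V)=\bigl(V,\ -2\sqrt{\mu}\,V-\beta\sqrt{s}\,\nabla^2 f(X)V-(1+\sqrt{\mu s})\nabla f(X)\bigr),
\end{equation*}
and initial value $Z(0)=\bigl(x_0,-\tfrac{2\sqrt{s}\nabla f(x_0)}{1+\sqrt{\mu s}}\bigr)$. Since $f\in\mathcal S^2_{\mu,L}(\mathbb R^n)$ for some $L\ge\mu$, the map $X\mapsto\nabla f(X)$ is $C^1$ and $X\mapsto\nabla^2 f(X)$ is (globally) Lipschitz, so $b\in C^1(\mathbb R^{2n};\mathbb R^{2n})$ and in particular $b$ is locally Lipschitz. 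Hence there is a unique maximal solution $Z\in C^1([0,T^\ast);\mathbb R^{2n})$, and $X$ inherits regularity from the equation: since $\ddot X=-2\sqrt{\mu}\dot X-\beta\sqrt{s}\nabla^2 f(X)\dot X-(1+\sqrt{\mu s})\nabla f(X)$ has a continuous right-hand side, $X\in C^2$ on the maximal interval.

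Next I would show $T^\ast=\infty$. Differentiating $\mathcal E$ along the flow and substituting $\ddot X_s$ from $(\ref{beta})$ gives $\dot{\mathcal E}(t)=-2\sqrt{\mu}\|\dot X_s(t)\|^2-\beta\sqrt{s}\langle\dot X_s(t),\nabla^2 f(X_s(t))\dot X_s(t)\rangle\le 0$, because $\nabla^2 f\succeq\mu I\succ0$ by strong convexity and $\beta,\sqrt{s}\ge0$; thus $\mathcal E(t)\le\mathcal E(0)$ on $[0,T^\ast)$. This yields simultaneously $\|\dot X_s(t)\|\le\sqrt{2\mathcal E(0)}=:\mathcal C_1$ and, via $\tfrac{\mu}{2}\|X_s(t)-x^\ast\|^2\le f(X_s(t))-f(x^\ast)\le\tfrac{\mathcal E(0)}{1+\sqrt{\mu s}}$, a uniform bound on $\|X_s(t)-x^\ast\|$; so $Z(t)$ stays in a fixed compact set $K\subset\mathbb R^{2n}$ for all $t\in[0,T^\ast)$. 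If $T^\ast<\infty$, the standard continuation principle forces $Z(t)$ to leave every compact subset of $\mathbb R^{2n}$ as $t\uparrow T^\ast$, contradicting this bound; hence $T^\ast=\infty$. (Equivalently, one may replace $f$ outside a large ball by a $C^2$ function with globally bounded and Lipschitz derivatives and apply the quoted theorem of Chillingworth on a compact manifold; the a priori confinement guarantees the modified dynamics coincide with the original ones along the whole trajectory.)

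I expect the only real obstacle to be that $b$ is \emph{not} globally Lipschitz on $\mathbb R^{2n}$ — the term $\nabla^2 f(X)V$ is a product of two a priori unbounded quantities — so the global existence theorem cannot be applied verbatim. The decisive point is therefore the a priori confinement of the trajectory to a compact set, which is precisely what the Lyapunov function $(\ref{Lyapunov Function})$ delivers; everything else is routine local ODE theory together with the continuation argument, and uniqueness is already contained in Picard–Lindelöf.
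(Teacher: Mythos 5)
Your argument is correct, but it reaches global existence by a different mechanism than the paper. The paper verifies the hypotheses of the quoted Chillingworth/Perko theorem directly: it restricts the phase-space vector field to the sublevel set $M_{\mathcal C_1}=\lbrace(X_s,\dot X_s):\|\dot X_s\|\leq\mathcal C_1\rbrace$ singled out by the Lyapunov function and computes an explicit global Lipschitz constant $2\max\lbrace\sqrt{1+8\mu+2\beta^2sL^2},\ \beta\sqrt{s}\,\mathcal C_1L'+(1+\sqrt{\mu s})L\rbrace$ there, using $\|\nabla^2 f\|\leq L$, the Hessian Lipschitz constant $L'$, and the velocity bound $\mathcal C_1$. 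You instead invoke only local Lipschitz continuity to obtain a unique maximal solution and then exclude finite-time blow-up by the continuation principle, confining the whole state $(X_s,\dot X_s)$ --- not just the velocity --- to a compact set via $\mathcal E$ together with strong convexity. Your route buys robustness: the set $M_{\mathcal C_1}$ is closed but unbounded in the $X_s$-direction, so it is not literally a compact manifold as the quoted theorem requires, and the paper's proof never bounds $\|X_s-x^*\|$; your extra step $\frac{\mu}{2}\|X_s-x^*\|^2\leq f(X_s)-f(x^*)\leq\mathcal E(0)/(1+\sqrt{\mu s})$ supplies exactly the missing confinement and makes the argument airtight, while the paper's computation buys an explicit Lipschitz constant that it does not subsequently use. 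One cosmetic point in your write-up: $f\in\mathcal S^2_{\mu,L}$ gives $\nabla^2 f$ Lipschitz, not $C^1$, so the vector field $b$ need not be $C^1$; but local Lipschitz continuity of $b$ --- which is all Picard--Lindel\"of needs --- does follow from the boundedness and Lipschitz continuity of $\nabla^2 f$ and $\nabla f$, as you in effect observe when you identify the product term $\nabla^2 f(X)V$ as the only obstacle to a global Lipschitz bound.
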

\begin{proof}
	Notice that 
	\begin{equation*}
		M_{\mathcal C_1}:=\left\lbrace(X_s,\dot X_s)\in \mathbb R^{2n}\big|\left\|\dot X_s \right\|\leq \mathcal C_1  \right\rbrace
	\end{equation*}
	is a compact manifold. The phase-space representation for $(\ref{beta})$ is
	\begin{equation}\label{Phase space representation}
		\frac{d}{dt}\begin{pmatrix}
			X_s\\
			\dot X_s
		\end{pmatrix}
		=
		\begin{pmatrix}
			\dot X_s\\
			-2\sqrt{\mu}\dot X_s-\beta\sqrt{s}\nabla^2 f(X_s)\dot X_s-(1+\sqrt{\mu s})\nabla f(X_s)
		\end{pmatrix}
	\end{equation}
	Now, for any $\begin{pmatrix}
		X_s,\dot X_s
	\end{pmatrix}^\top$, $\begin{pmatrix}
		Y_s,\dot Y_s
	\end{pmatrix}^\top\in  M_{\mathcal C_1}$, 
	\begin{align*}
		\left\|\frac{d}{dt}\begin{pmatrix}
			X_s\\
			\dot X_s
		\end{pmatrix}-\frac{d}{dt}\begin{pmatrix}
			Y_s\\
			\dot Y_s
		\end{pmatrix} \right\| &\leq \left\|\begin{pmatrix}
			\dot X_s-\dot Y_s\\
			-(2\sqrt{\mu}I+\beta\sqrt{s}\nabla^2 f(X_s))(\dot X_s-\dot Y_s)
		\end{pmatrix} \right\|\\
		&+\beta\sqrt{s}\left\|\begin{pmatrix}
			0\\
			\left(\nabla^2 f(X_s)-\nabla^2 f(Y_s) \right)\dot Y_s
		\end{pmatrix} \right\|\\
		&+(1+\sqrt{\mu s})\left\|\begin{pmatrix}
			0\\
			\nabla f(X_s)-\nabla f(Y_s)
		\end{pmatrix} \right\|\\
		&\leq \sqrt{1+8\mu+2\beta^2sL^2}   \left\|\dot X_s-\dot Y_s \right\| +\left[\beta\sqrt{s}\mathcal C_1 L'+(1+\sqrt{\mu s})L\right]\|X_s-Y_s\|\\
		&\leq 2\max\left\lbrace \sqrt{1+8\mu+2\beta^2sL^2},\beta\sqrt{s}\mathcal C_1 L'+(1+\sqrt{\mu s})L \right\rbrace\left\|
		\begin{pmatrix}
			X_s\\
			\dot X_s
		\end{pmatrix}
		-
		\begin{pmatrix}
			Y_s\\
			\dot Y_s
		\end{pmatrix}
		\right\|
	\end{align*} 
	Hence,  based on the above calculation and the the phase space representation $(\ref{Phase space representation})$, we get the desired results. 
\end{proof}
Here we quickly remark that the low resolution counterparts of this $\beta$-High Resolution ODE is the same as both of the heavy-ball method and NAG-SC, which is 
\begin{equation}\label{Low Resolution ODE}
	\ddot X(t)+2\sqrt{\mu}\dot X(t)+\nabla f(X(t))=0
\end{equation}
 Based on the Lyapunov function $(\ref{Lyapunov Function})$, the gradient norm is also bounded, i.e. 
 \begin{equation*}
 	\sup_{0\leq t<\infty} \left\|\nabla f(X_s(t)) \right\|\leq \mathcal C_2
 \end{equation*}
 For the low resolution ODE $(\ref{Low Resolution ODE})$, it has phase representation 
 \begin{equation}\label{Phase Representation for low resolution}
 	\frac{d}{dt}
 	\begin{pmatrix}
 		X\\
 		\dot X
 	\end{pmatrix}
 	=
 	\begin{pmatrix}
 		\dot X\\
 		-2\sqrt{\mu}\dot X-\nabla f(X)
 	\end{pmatrix}
 \end{equation}
 and again by Lyapunov function, the solution $X=X(t)$ of $(\ref{Low Resolution ODE})$ is bounded, i.e.
 \begin{equation*}
 	\sup_{0\leq t<\infty}\left\|\dot X(t) \right\|\leq\mathcal C_3
 \end{equation*}
 It is easy to see that we can find a constant $\mathcal L_1$ such that
 \begin{equation*}
 	\left\|
 	\begin{pmatrix}
 		\dot X\\
 		-2\sqrt{\mu}\dot X-\nabla f(X)
 	\end{pmatrix}
 	-
 	\begin{pmatrix}
 		\dot Y\\
 		-2\sqrt{\mu}\dot Y-\nabla f(Y)
 	\end{pmatrix}
    \right\|
    \leq 
     \mathcal L_1
    \left\|
    \begin{pmatrix}
    	X\\
    	\dot X
    \end{pmatrix}
    -
    \begin{pmatrix}
    	Y\\
    	\dot Y
    \end{pmatrix}
    \right\|
 \end{equation*}
 Now, we study the approximation. We first introduce several lemmas. 
\begin{Lemma}[Gronwall's Lemma]\label{Calculus Lemma}
	Let $m(t), t\in [0,T]$, be a nonnegative function with the following relation,
	\begin{equation*}
		m(t)\leq C+\alpha\int_0^t m(s)ds
	\end{equation*}
	with $C,\alpha>0$. Then we have 
	\begin{equation*}
		m(t)\leq Ce^{\alpha t}
	\end{equation*}
\end{Lemma}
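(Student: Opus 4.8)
The plan is to convert the integral inequality into a differential inequality for the right-hand side and then integrate it against the natural integrating factor. First I would set $u(t):=C+\alpha\int_0^t m(s)\,ds$, so that by hypothesis $m(t)\leq u(t)$ for every $t\in[0,T]$ and $u(0)=C>0$. Since $m$ is integrable on $[0,T]$ (indeed continuous in the intended application), $u$ is absolutely continuous with $\dot u(t)=\alpha m(t)\leq \alpha u(t)$ for a.e.\ $t$.

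Next I would multiply through by $e^{-\alpha t}$ and recognize the left-hand side as an exact derivative: writing $v(t):=u(t)e^{-\alpha t}$, one has $\dot v(t)=e^{-\alpha t}\bigl(\dot u(t)-\alpha u(t)\bigr)\leq 0$ for a.e.\ $t$, so $v$ is non-increasing on $[0,T]$. Hence $v(t)\leq v(0)=u(0)=C$, i.e.\ $u(t)\leq Ce^{\alpha t}$, and combining with $m(t)\leq u(t)$ yields the claimed bound $m(t)\leq Ce^{\alpha t}$.

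The only delicate point is the regularity invoked in the "integrate the differential inequality" step. If one prefers to avoid appealing to absolute continuity, I would instead iterate the hypothesis directly: substituting the bound into itself $n$ times produces $m(t)\leq C\sum_{k=0}^{n-1}\frac{(\alpha t)^k}{k!}+\alpha^n\int_{0\leq s_1\leq\cdots\leq s_n\leq t} m(s_1)\,ds_1\cdots ds_n$, and since $m$ is bounded on the compact interval $[0,T]$ the remainder is at most $\|m\|_\infty(\alpha t)^n/n!\to 0$ as $n\to\infty$, again giving $m(t)\leq Ce^{\alpha t}$. In the use we make of this lemma for the ODE approximation, $m$ is continuous, so either route applies, and I expect no genuine obstacle here beyond routine bookkeeping.
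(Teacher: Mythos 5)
Your argument is correct and is exactly the standard integrating-factor proof that the paper dismisses with ``Trivially by calculus'': setting $u(t)=C+\alpha\int_0^t m(s)\,ds$, noting $\dot u=\alpha m\leq\alpha u$, and showing $u(t)e^{-\alpha t}$ is non-increasing is precisely the intended calculus argument. Your regularity caveat and the alternative Picard-type iteration are both sound, though unnecessary in the paper's application where $m$ is continuous.
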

\begin{proof}
	Trivially by calculus.
\end{proof}
\begin{Lemma}\label{Intermediate Lemma}
	Let $X_s(t)$ and $X(t)$ be the solutions of $\beta$-High Resolution ODE $(\ref{beta})$ and Low Resolution Counterpart $(\ref{Low Resolution ODE})$, respectively. Then 
	\begin{equation*}
		\lim_{s\to 0} \max_{0\leq t\leq T}\left\|X_s(t)-X(t)\right\|=0
	\end{equation*}
\end{Lemma}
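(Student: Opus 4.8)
The plan is to run the standard Gronwall-type comparison argument between the two ODEs. I would write both equations in the phase-space form introduced above, namely $(\ref{Phase space representation})$ for $Z_s := (X_s,\dot X_s)^\top$ and $(\ref{Phase Representation for low resolution})$ for $Z := (X,\dot X)^\top$, integrate each from $0$ to $t$, and subtract. Because both vector fields agree except for the Hessian term $\beta\sqrt{s}\,\nabla^2 f(X_s)\dot X_s$ and the $\sqrt{\mu s}$-correction in the gradient coefficient, the difference $Z_s(t)-Z(t)$ satisfies an integral inequality of the form
\begin{equation*}
\|Z_s(t)-Z(t)\| \le \|Z_s(0)-Z(0)\| + \mathcal L_1\int_0^t \|Z_s(\tau)-Z(\tau)\|\,d\tau + R(s,t),
\end{equation*}
where $\mathcal L_1$ is the Lipschitz constant for the low-resolution field recorded just before this lemma, and $R(s,t)$ collects all terms that carry an explicit factor of $\sqrt{s}$ (or $\sqrt{\mu s}$).

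First I would bound the remainder $R(s,t)$: on $[0,T]$ the quantities $\|\dot X_s\|$, $\|\nabla f(X_s)\|$, $\|\nabla^2 f(X_s)\|\le L$ are all uniformly bounded (the first two by the Lyapunov-function estimates stated above with constants $\mathcal C_1,\mathcal C_2$, the third by $L$-smoothness), so $\|\beta\sqrt{s}\,\nabla^2 f(X_s)\dot X_s\|\le \sqrt{s}\,L\mathcal C_1$ and $\|\sqrt{\mu s}\,\nabla f(X_s)\|\le\sqrt{\mu s}\,\mathcal C_2$; integrating over $[0,T]$ gives $R(s,t)\le C_T\sqrt{s}$ for a constant $C_T$ depending only on $T,\mu,L,\mathcal C_1,\mathcal C_2$. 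I also need the initial data to match in the limit: $X_s(0)=x_0=X(0)$ exactly, and $\dot X_s(0)=-\tfrac{2\sqrt s}{1+\sqrt{\mu s}}\nabla f(x_0)\to 0$, so I would take $\dot X(0)=0$ (or whatever the low-resolution initialization is) and note $\|Z_s(0)-Z(0)\|=O(\sqrt s)$ as well. Hence the whole inequality reads $m(t)\le C_T'\sqrt{s}+\mathcal L_1\int_0^t m(\tau)\,d\tau$ with $m(t):=\|Z_s(t)-Z(t)\|$.

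Then I would apply Gronwall's Lemma (Lemma \ref{Calculus Lemma}) to conclude $m(t)\le C_T'\sqrt{s}\,e^{\mathcal L_1 t}\le C_T'\sqrt{s}\,e^{\mathcal L_1 T}$ for all $t\in[0,T]$, so that $\max_{0\le t\le T}\|X_s(t)-X(t)\|\le \max_{0\le t\le T} m(t)\to 0$ as $s\to 0$, which is the claim.

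The main obstacle is the bookkeeping needed to make the remainder estimate honest and uniform: one must verify that the a priori bounds $\mathcal C_1,\mathcal C_2$ from the Lyapunov function $(\ref{Lyapunov Function})$ hold \emph{uniformly in $s$} for small $s$ (they do, since the Lyapunov estimate only uses $0\le\beta\le1$, $\mu\le L$, and $s$ bounded), and that the Hessian factor is controlled by $L$-smoothness rather than requiring the $C^2$ Lipschitz constant $L'$ — the term with $L'$ only appeared in the existence proof and is not needed here. A secondary subtlety is that the difference of the \emph{gradient} coefficients is $(1+\sqrt{\mu s})\nabla f(X_s)-\nabla f(X)=\nabla f(X_s)-\nabla f(X)+\sqrt{\mu s}\,\nabla f(X_s)$; the first piece is Lipschitz-controlled and folds into the $\mathcal L_1\int m$ term, while the second is $O(\sqrt s)$ and goes into $R$. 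Everything else is the routine Gronwall computation.
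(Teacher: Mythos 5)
Your proposal is correct and follows essentially the same route as the paper: subtract the two phase-space representations, absorb the Lipschitz-controlled part into $\mathcal L_1\int_0^t\|Z_s-Z\|$, bound the explicitly $\sqrt{s}$-weighted remainder (Hessian term, $\sqrt{\mu s}\,\nabla f(X_s)$, and the $O(\sqrt s)$ mismatch in initial velocity) uniformly via the Lyapunov-function constants, and conclude with Gronwall's Lemma. The only cosmetic difference is that the paper runs the argument on the squared norm $\|X_s-X\|^2+\|\dot X_s-\dot X\|^2$ via its time derivative rather than on the norm itself.
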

\begin{proof}
	By $(\ref{Phase space representation})$ and $(\ref{Phase Representation for low resolution})$, 
	\begin{align*}
		\frac{d}{dt}
		\begin{pmatrix}
			X_s-X\\
			\dot X_s-\dot X
		\end{pmatrix}
		=
		\begin{pmatrix}
			\dot X_s-\dot X\\
			-2\sqrt{\mu}(\dot X_s-\dot X)-(\nabla f(X_s)-\nabla f(X))
		\end{pmatrix}
		-\sqrt{s}
		\begin{pmatrix}
			0\\
			\beta \nabla^2 f(X_s)\dot X_s+\sqrt{\mu}\nabla f(X_s)
		\end{pmatrix}
	\end{align*}
	Then, we have 
	\begin{align*}
		&\ \ \ \ \|X_s(t)-X(t)\|^2+\|\dot X_s(t)-\dot X(t)\|^2\\
		&=2\int_0^t\left\langle
		\begin{pmatrix}
			X_s(u)-X(u)\\
			\dot X_s(u)-\dot X(u)
		\end{pmatrix}
		,
		\frac{d}{du}
		\begin{pmatrix}
			X_s(u)-X(u)\\
			\dot X_s(u)-\dot X(u)
		\end{pmatrix}
		 \right\rangle
		 du+\|X_s(0)-X(0)\|^2+\|\dot X_s(0)-\dot X(0)\|^2\\
		 &\leq 2\mathcal L_1\int_0^t \|X_s(u)-X(u)\|^2+\|\dot X_s(u)-\dot X(u)\|^2du\\
		 &\ \ \ +\left[(\mathcal C_1+\mathcal C_3)(\beta L\mathcal C_1+\mathcal C_2\sqrt{\mu})+\frac{4\sqrt{s}}{(1+\sqrt{\mu s})^2}\|\nabla f(x_0)\|^2 \right]\sqrt{s}\\
		 &\leq 2\mathcal L_1\int_0^t \|X_s(u)-X(u)\|^2+\|\dot X_s(u)-\dot X(u)\|^2du+\mathcal C_5\sqrt{s}
	\end{align*}
	By Lemma $(\ref{Calculus Lemma})$, we have that 
	\begin{equation*}
		\|X_s(t)-X(t)\|^2+\left \|\dot X_s(t)-\dot X(t)\right \|^2\leq \mathcal C_5\sqrt{s}\exp(2\mathcal L_1)t
	\end{equation*}
	This completes the proof. 
\end{proof}
\begin{Lemma}\label{previous lemma}
	The discrete method of $\beta$-High Resolution ODE converges to their low-resolution ODE in the sense that 
	\begin{equation*}
		\lim_{s\to 0} \max_{0\leq k\leq \frac{T}{\sqrt{s}}}\|x_k-X(k\sqrt{s})\|=0
	\end{equation*}
\end{Lemma}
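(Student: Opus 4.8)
The strategy is to route the comparison through the continuous $\beta$-High Resolution trajectory. Write $t_k = k\sqrt s$ and let $X_s$ be the global solution of the $\beta$-High Resolution ODE \eqref{beta} with its prescribed initial data. By the triangle inequality,
\[
\max_{0\le k\le T/\sqrt s}\bigl\|x_k - X(t_k)\bigr\|
\;\le\;
\max_{0\le k\le T/\sqrt s}\bigl\|x_k - X_s(t_k)\bigr\|
\;+\;
\max_{0\le t\le T}\bigl\|X_s(t) - X(t)\bigr\| ,
\]
and the second term already tends to $0$ by Lemma~\ref{Intermediate Lemma}. So the whole statement reduces to proving $\max_{0\le k\le T/\sqrt s}\|x_k - X_s(t_k)\|\to 0$ as $s\to 0$, i.e.\ that the two-step scheme \eqref{Discrete Method} tracks the very ODE it was built from. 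This is a consistency-plus-stability argument, which I would run as follows (working, as in Lemma~\ref{Intermediate Lemma}, with $f\in\mathcal S^2_\mu$).

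First, for the local truncation error, I would substitute $X_s(t_k)$ into the right-hand side of \eqref{Discrete Method} and call the residual $\tau_k$. By exactly the Taylor expansions \eqref{Taylor eq 1}, \eqref{Taylor eq 2}, \eqref{Gradient Difference} used to derive \eqref{beta} --- but now tracked with quantitative remainders --- and using that $X_s$ solves \eqref{beta} identically, the first- and second-order terms cancel and one gets $\|\tau_k\|\le C\,s^{3/2}$ uniformly for $0\le k\le T/\sqrt s$. The constant $C = C(T,\mu,L,L')$ comes from: the $s$-uniform bounds $\|\dot X_s\|\le\mathcal C_1$, $\|\nabla f(X_s)\|\le\mathcal C_2$ furnished by the Lyapunov functional \eqref{Lyapunov Function}; the estimates $\|\nabla^2 f\|\le L$ and the $L'$-Lipschitzness of $\nabla^2 f$; and the consequence, via \eqref{beta}, that $\ddot X_s$ is $s$-uniformly bounded and Lipschitz, which is precisely the regularity the finite-difference remainder estimates need. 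The initial mismatch is of the same order: $x_0 = X_s(0)$ and, since $\dot X_s(0) = -\tfrac{2\sqrt s\,\nabla f(x_0)}{1+\sqrt{\mu s}}$, one has $x_1 = x_0 - \tfrac{2s\nabla f(x_0)}{1+\sqrt{\mu s}} = X_s(\sqrt s) + O(s)$.

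Then, for stability, set $e_k = x_k - X_s(t_k)$ and $\delta_k = e_k - e_{k-1}$; subtracting the perturbed recursion from \eqref{Discrete Method} and bounding the gradient term by $sL\|e_k\|$ and the gradient-correction (Hessian-difference) term by $sL(\|e_k\| + \|e_{k-1}\|)$ via $L$-Lipschitzness of $\nabla f$, one obtains
\[
\|\delta_{k+1}\|
\;\le\;
\frac{1-\sqrt{\mu s}}{1+\sqrt{\mu s}}\,\|\delta_k\|
\;+\;
C s\bigl(\|e_k\| + \|e_{k-1}\|\bigr)
\;+\;
\|\tau_k\| ,
\qquad
\|e_k\|\le\|e_0\| + \sum_{j=1}^{k}\|\delta_j\| .
\]
Since the momentum factor $\tfrac{1-\sqrt{\mu s}}{1+\sqrt{\mu s}}$ is $\le 1$, the one-step amplification matrix of the pair $(\|e_k\|,\|\delta_k\|)$ has eigenvalues $1 + O(\sqrt s)$, so iterating $N = T/\sqrt s$ steps and applying a discrete Gronwall inequality (the discrete analogue of Lemma~\ref{Calculus Lemma}) gives
\[
\max_{0\le k\le T/\sqrt s}\bigl(\|e_k\| + \|\delta_k\|\bigr)
\;\le\;
e^{O(T)}\Bigl(\|e_0\| + \|\delta_1\| + \sum_{k\le T/\sqrt s}\|\tau_k\|\Bigr)
\;\le\;
e^{O(T)}\bigl(O(s) + \tfrac{T}{\sqrt s}\,O(s^{3/2})\bigr)
\;=\;
O(s) ,
\]
which $\to 0$. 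Together with Lemma~\ref{Intermediate Lemma} this proves the lemma; one could equally compare $x_k$ with $X(t_k)$ directly, since \eqref{Discrete Method} differs from a consistent discretization of the low-resolution ODE \eqref{Low Resolution ODE} only by $O(\sqrt s)$ terms, but routing through $X_s$ lets us recycle Lemma~\ref{Intermediate Lemma}.

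The step I expect to be the main obstacle is not the Gronwall bookkeeping but the uniform-in-$s$ control behind $\|\tau_k\| = O(s^{3/2})$: one must turn the formal Taylor derivation of Section~2 into an honest bound whose constant does not degenerate as $s\to 0$. That means extracting $\|\dot X_s\|\le\mathcal C_1$, $\|\nabla f(X_s)\|\le\mathcal C_2$ from \eqref{Lyapunov Function}, upgrading them (through \eqref{beta} and the $L'$-Lipschitz Hessian) to an $s$-uniform Lipschitz bound on $\ddot X_s$, and then checking that the $\sqrt{\mu s}$-dependent coefficients in \eqref{Discrete Method} match those of \eqref{beta} closely enough that the leading terms genuinely cancel --- in effect redoing the rearrangements of Section~2 with remainders rather than only to leading order. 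Once that is in place, the remaining pieces, including the Hessian-difference term that is the discrete shadow of the gradient correction, are absorbed by the crude Lipschitz estimates above and a routine discrete Gronwall argument.
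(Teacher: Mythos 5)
Your argument is sound, but note that the paper does not actually prove this lemma: it simply defers to the methods of \cite{LyapAnalysis} and \cite{DiffM}, so there is no in-paper proof to match against. What you propose is essentially the standard consistency--stability--discrete-Gronwall scheme that those references employ, with one organizational twist: you compare $x_k$ to the high-resolution trajectory $X_s$ first and then invoke Lemma~\ref{Intermediate Lemma} to pass to $X$, which lets you reuse work the paper has already done rather than re-deriving a direct comparison with the low-resolution ODE. The accounting checks out: with $f\in\mathcal S^2_{\mu,L}$ the Lyapunov bounds $\|\dot X_s\|\le\mathcal C_1$, $\|\nabla f(X_s)\|\le\mathcal C_2$ are uniform in $s$ (since $\dot X_s(0)=O(\sqrt s)$ keeps $\mathcal E(0)$ bounded), differentiating \eqref{beta} then gives an $s$-uniform Lipschitz bound on $\ddot X_s$, and that regularity yields exactly $O(\sqrt s)$ for the centered second difference and hence $\|\tau_k\|=O(s^{3/2})$ in position form --- note the leading terms cancel only up to the $\beta s\sqrt{\mu}\,\nabla^2 f(X_s)\dot X_s$ discrepancy between the exact rearrangement and \eqref{beta}, but that is itself $O(s)$ in acceleration form, so your $s^{3/2}$ survives; summing $T/\sqrt s$ such terms against the $e^{O(T)}$ amplification of the $(\|e_k\|,\|\delta_k\|)$ pair gives $O(s)$. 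You correctly identify the only genuinely load-bearing ingredient (the $s$-uniform regularity of $X_s$ needed to make the Section~2 Taylor expansion quantitative), so I see no gap; the one thing to make explicit in a full write-up is that the constants $\mathcal C_1,\mathcal C_2$ and the Lipschitz constant of $\ddot X_s$ must be shown independent of $s$ on $(0,s_0]$, which is where the hypothesis $f\in\mathcal S^2_\mu$ (Lipschitz Hessian) is actually used.
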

\begin{proof}
	The proof of this Lemma follows closely from the method used in \cite{LyapAnalysis} and \cite{DiffM} . Here we do not go into any details.  
\end{proof}
\begin{Prop}
	For any $f\in \mathcal S_\mu^2(\mathbb R^n):=\cup_{L\geq \mu}\mathcal S_{\mu,L}^2(\mathbb R^n)$, the $\beta$-High Resolution ODE $(\ref{beta})$ with the specified initial conditions has a unique global solution $X\in C^2([0,\infty);\mathbb R^n)$. Moreover, the discretized method converges to the $\beta$-High Resolution ODE in the sense that 
	\begin{equation*}
		\limsup_{s\to 0}\max_{0\leq k\leq\frac{T}{\sqrt{s}}}\left \|x_k-X(k\sqrt{s})\right \|=0
	\end{equation*} 
	for any fixed $T>0$. 
\end{Prop}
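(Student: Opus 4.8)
The plan is to deduce the Proposition by combining the global existence theorem established above with the three preceding lemmas, so that no genuinely new estimate is required.

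For the first assertion, the existence of a unique global solution $X\in C^2([0,\infty);\mathbb R^n)$ is precisely the content of the global existence theorem proved above: the Lyapunov function $\mathcal E$ in $(\ref{Lyapunov Function})$ supplies the a priori bound $\sup_{t\ge 0}\|\dot X_s(t)\|\le\mathcal C_1$, so any maximal solution is confined to the compact manifold $M_{\mathcal C_1}$, on which the phase-space vector field of $(\ref{Phase space representation})$ is globally Lipschitz; Chillingworth's theorem then forces the solution to exist for all $t\ge 0$, and differentiating $(\ref{beta})$ once more, using continuity of $\nabla f$ and $\nabla^2 f$, gives $X\in C^2$. It therefore remains only to establish the convergence statement.

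For the convergence, fix $T>0$, let $X_s$ denote the solution of the $\beta$-High Resolution ODE $(\ref{beta})$ and let $X$ denote the solution of its low-resolution counterpart $(\ref{Low Resolution ODE})$ with the corresponding initial data. For every integer $k$ with $0\le k\le T/\sqrt s$ one has $k\sqrt s\in[0,T]$, so the triangle inequality gives
\begin{equation*}
\left\|x_k-X_s(k\sqrt s)\right\|\le\left\|x_k-X(k\sqrt s)\right\|+\left\|X(k\sqrt s)-X_s(k\sqrt s)\right\|\le\max_{0\le j\le T/\sqrt s}\left\|x_j-X(j\sqrt s)\right\|+\max_{0\le t\le T}\left\|X(t)-X_s(t)\right\|.
\end{equation*}
Taking the maximum over $k$ on the left and then $\limsup_{s\to 0}$, the first term on the right tends to $0$ by Lemma \ref{previous lemma} and the second by Lemma \ref{Intermediate Lemma}; hence $\limsup_{s\to 0}\max_{0\le k\le T/\sqrt s}\|x_k-X_s(k\sqrt s)\|=0$, which is the desired statement.

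The substance of the argument is thus entirely in its inputs rather than in this short triangle-inequality step. The part I expect to be the real obstacle is Lemma \ref{previous lemma}, whose proof is only indicated through the references and requires a discrete Lyapunov/Gronwall analysis along the lines of \cite{LyapAnalysis} and \cite{DiffM}; a secondary point to verify is the consistency of the initial data, since the scheme $(\ref{beta discrete method})$ is launched from $x_0$ and $x_1=x_0-2s\nabla f(x_0)/(1+\sqrt{\mu s})$, so that $(x_1-x_0)/\sqrt s\to 0=\dot X(0)$ for the low-resolution ODE while $\dot X_s(0)=-2\sqrt s\,\nabla f(x_0)/(1+\sqrt{\mu s})=O(\sqrt s)$ for $(\ref{beta})$. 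Both discrepancies are of order $\sqrt s$ and are exactly what gets absorbed into the term $\mathcal C_5\sqrt s$ in the proof of Lemma \ref{Intermediate Lemma} and into the discretization error underlying Lemma \ref{previous lemma}, so once the two limits are composed no obstruction survives and the Proposition follows.
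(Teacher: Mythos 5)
Your proposal is correct and follows essentially the same route as the paper: the paper's own proof simply cites Lemma \ref{Calculus Lemma}, Lemma \ref{Intermediate Lemma} and Lemma \ref{previous lemma} (together with the global existence theorem already established), and your triangle-inequality decomposition through the low-resolution solution $X$ is exactly the step the paper leaves implicit. Your added remarks on the $O(\sqrt{s})$ mismatch of initial data and on Lemma \ref{previous lemma} being the only substantive input are accurate observations, not deviations from the paper's argument.
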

\begin{proof}
	This result follows from the Lemma $\ref{Calculus Lemma}$, Lemma $\ref{Intermediate Lemma}$ and Lemma $\ref{previous lemma}$.  
\end{proof}
\section{Convergence Rate of Continuous ODE}
In this section, we prove the following theorem
\begin{Th}\label{Theorem of Convergence Rate}
	Let $f\in \mathcal S_{\mu,L}^2(\mathbb R^n)$. Then for any step size $0\leq s\leq 1/L$, the solution $X=X(t)$ of the $\beta$-High Resolution ODE $(\ref{beta})$ satisfies 
	\begin{equation*}
		f(X(t))-f(x^*)\leq \frac{3+(2-\beta)^2}{2s}\left\|x_0-x^* \right\|^2\cdot e^{-\frac{\sqrt{\mu}}{4}t}
	\end{equation*}
\end{Th}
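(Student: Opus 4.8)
The plan is to run the Lyapunov (Bregman-type energy) argument that drives the high-resolution analysis of \cite{HRODE}, carrying the parameter $\beta$ through the gradient terms. The candidate energy is
\[
\mathcal E(t)=(1+\sqrt{\mu s})\bigl(f(X(t))-f(x^*)\bigr)+\tfrac14\|\dot X(t)\|^2+\tfrac14\bigl\|\dot X(t)+2\sqrt{\mu}\,(X(t)-x^*)+\beta\sqrt{s}\,\nabla f(X(t))\bigr\|^2,
\]
and the reason for grouping the last three vectors together is that, writing $w(t):=\dot X(t)+2\sqrt{\mu}(X(t)-x^*)+\beta\sqrt{s}\nabla f(X(t))$, the $\beta$-High Resolution ODE \eqref{beta} gives the exact identity $\dot w=\ddot X+2\sqrt{\mu}\dot X+\beta\sqrt{s}\,\nabla^2 f(X)\dot X=-(1+\sqrt{\mu s})\nabla f(X)$.

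Differentiating $\mathcal E$ along a solution, substituting \eqref{beta} for $\ddot X$ in the $\tfrac14\|\dot X\|^2$ term and the identity above in the $\tfrac14\|w\|^2$ term, the three $\langle\dot X,\nabla f(X)\rangle$ contributions cancel and one is left with
\[
\dot{\mathcal E}(t)=-\sqrt{\mu}\,\|\dot X\|^2-\tfrac{\beta\sqrt s}{2}\langle\dot X,\nabla^2 f(X)\dot X\rangle-(1+\sqrt{\mu s})\sqrt{\mu}\,\langle X-x^*,\nabla f(X)\rangle-\tfrac{(1+\sqrt{\mu s})\beta\sqrt s}{2}\|\nabla f(X)\|^2.
\]
The Hessian term is nonpositive because $\nabla^2 f(X)$ is positive semidefinite, the last term is nonpositive, and $\mu$-strong convexity supplies $\langle X-x^*,\nabla f(X)\rangle\ge f(X)-f(x^*)+\tfrac{\mu}{2}\|X-x^*\|^2$. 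Feeding $\|w\|^2\le 3\|\dot X\|^2+12\mu\|X-x^*\|^2+3\beta^2 s\|\nabla f(X)\|^2$ into $\tfrac{\sqrt{\mu}}{4}\mathcal E$ and collecting, the coefficients of $\|\dot X\|^2$ and of $f(X)-f(x^*)$ in $\dot{\mathcal E}+\tfrac{\sqrt{\mu}}{4}\mathcal E$ are negative outright, while the one term with a potentially positive coefficient, a multiple of $\|X-x^*\|^2$, is absorbed using the residual negative multiple of $f(X)-f(x^*)\ge\tfrac{\mu}{2}\|X-x^*\|^2$ coming again from strong convexity; the constraints $0\le\beta\le1$ and $\mu s\le1$ are what keep the numerical constants on the right side of these inequalities. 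This gives $\dot{\mathcal E}\le-\tfrac{\sqrt{\mu}}{4}\mathcal E$, so by Gronwall's Lemma (Lemma~\ref{Calculus Lemma}) $\mathcal E(t)\le\mathcal E(0)\,e^{-\sqrt{\mu}t/4}$; and since all three pieces of $\mathcal E$ are nonnegative and $1+\sqrt{\mu s}\ge1$ we also have $f(X(t))-f(x^*)\le\mathcal E(t)$.

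It remains to estimate $\mathcal E(0)$. With the prescribed $X(0)=x_0$, $\dot X(0)=-\frac{2\sqrt s}{1+\sqrt{\mu s}}\nabla f(x_0)$, one writes $\dot X(0)+\beta\sqrt s\,\nabla f(x_0)=-\gamma\sqrt s\,\nabla f(x_0)$ with $\gamma=\frac{2}{1+\sqrt{\mu s}}-\beta$; because $\sqrt{\mu s}\le1$ and $\beta\le1$ we have $0\le\gamma\le 2-\beta$, which is exactly where the factor $(2-\beta)^2$ in the theorem originates. In $\|w(0)\|^2$ the cross term $-4\gamma\sqrt{\mu s}\langle\nabla f(x_0),x_0-x^*\rangle$ is nonpositive (since $\gamma\ge0$ and $\langle\nabla f(x_0),x_0-x^*\rangle\ge0$ by convexity) and may be dropped; the remaining terms are then controlled by $L$-smoothness in the forms $\|\nabla f(x_0)\|\le L\|x_0-x^*\|$, $\|\nabla f(x_0)\|^2\le 2L(f(x_0)-f(x^*))$ and $f(x_0)-f(x^*)\le\tfrac{L}{2}\|x_0-x^*\|^2$, together with $s\le 1/L$, $\mu\le L$ and $1+\sqrt{\mu s}\le2$. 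Every resulting term is a multiple of $\tfrac1s\|x_0-x^*\|^2$, and after tracking the constants one arrives at $\mathcal E(0)\le\frac{3+(2-\beta)^2}{2s}\|x_0-x^*\|^2$; combined with the previous paragraph this proves the theorem.

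The step I expect to demand the most care is the simultaneous calibration of the energy: the weights in $\mathcal E$ — the factor $1+\sqrt{\mu s}$ on the function gap, the two $\tfrac14$'s, and the precise vector inside the last square — must be chosen so that the differential inequality closes with \emph{exactly} the rate $\sqrt{\mu}/4$ uniformly over $\beta\in[0,1]$ and $s\in(0,1/L]$, and at the same time so that $\mathcal E(0)$ contracts to precisely $\frac{3+(2-\beta)^2}{2s}\|x_0-x^*\|^2$ rather than a looser prefactor; the individual convexity and smoothness estimates feeding into either half are routine, but making one choice of $\mathcal E$ serve both ends is the real content.
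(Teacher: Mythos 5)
Your proposal is, in substance, the paper's own proof: same energy functional \eqref{Energy functional}, same observation that $\dot w=-(1+\sqrt{\mu s})\nabla f(X)$ along \eqref{beta} so the $\langle \dot X,\nabla f(X)\rangle$ terms cancel, same use of positive semidefiniteness of $\nabla^2 f$, strong convexity, and the Cauchy--Schwarz bound on $\|w\|^2$ to close the inequality $\dot{\mathcal E}\le -\tfrac{\sqrt{\mu}}{4}\mathcal E$, followed by Gronwall and an estimate of the initial energy. The only differences are bookkeeping: the paper splits the strong-convexity inequality as $\tfrac{1+\sqrt{\mu s}}{2}(\cdot)+\tfrac12(\cdot)$ so that the $\|X-x^*\|^2$ contributions cancel exactly, whereas you absorb a leftover positive multiple of $\|X-x^*\|^2$ into the negative $f(X)-f(x^*)$ deficit; both close.

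One concrete slip in the last step: you discard the factor $1/(1+\sqrt{\mu s})$ by using $f(X(t))-f(x^*)\le\mathcal E(t)$ and then claim $\mathcal E(0)\le\frac{3+(2-\beta)^2}{2s}\|x_0-x^*\|^2$. That inequality for $\mathcal E(0)$ itself is false at the corner of the parameter range: take $f(x)=\tfrac{L}{2}\|x-x^*\|^2$, $\mu=L$, $s=1/L$, $\beta=1$; then $\gamma=0$ and $\mathcal E(0)=\tfrac{9}{4s}\|x_0-x^*\|^2$, which exceeds $\tfrac{3+(2-\beta)^2}{2s}\|x_0-x^*\|^2=\tfrac{2}{s}\|x_0-x^*\|^2$. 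The paper instead bounds $f(X(t))-f(x^*)\le \mathcal E(t)/(1+\sqrt{\mu s})$, and it is $\mathcal E(0)/(1+\sqrt{\mu s})$ that is at most $\frac{3+(2-\beta)^2}{2s}\|x_0-x^*\|^2$ (with equality in the limiting regime $\mu s\to 0$). Restoring that factor, your constant accounting goes through and the two proofs coincide.
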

We first define the Energy Functional $\mathcal E_\beta(t)$ of $\beta$-High Resolution ODE as the following:
\begin{equation}\label{Energy functional}
	\mathcal E_\beta(t):=\left(1+\sqrt{\mu s}\right)\left(f(X)-f(x^*)\right)+\frac{1}{4}\left\|\dot X\right\|^2+\frac{1}{4}\left\|\dot X+2\sqrt{\mu}(X-x^*)+\beta\sqrt{s}\nabla f(X) \right\|^2
\end{equation} 
The next lemma is of key importance to us.
\begin{Lemma}
	For any step size $s>0$, the energy functional $(\ref{Energy functional})$ with $X=X(t)$ being the our solution to the $\beta$-High Resolution ODE satisfies 
	\begin{equation}\label{4.2}
		\frac{d\mathcal E_\beta(t)}{dt}\leq -\frac{\sqrt{\mu}}{4}\mathcal E_\beta(t)-\underbrace{\frac{1}{4}\left(\frac{8\beta s\sqrt{\mu}-3s\beta^2\sqrt{\mu}}{4}\left\|\nabla f(X) \right\|^2+2\sqrt{\mu}\|\dot X\|^2+(\sqrt{\mu}+\mu\sqrt{s})(f(X)-f(x^*) \right)}_{:=\Delta_\beta}
	\end{equation}
	In particular,
	\begin{equation}\label{Key step for the Theorem}
		\frac{d\mathcal E_\beta(t)}{dt}\leq -\frac{\sqrt{\mu}}{4}\mathcal E_\beta(t)
	\end{equation}
\end{Lemma}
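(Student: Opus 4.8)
The plan is to differentiate $\mathcal E_\beta$ along a solution $X=X(t)$ of $(\ref{beta})$ and then dominate every resulting term using convexity and $\mu$-strong convexity of $f$. The computation becomes transparent once one notes that, since $\frac{d}{dt}\nabla f(X)=\nabla^2 f(X)\dot X$, the ODE $(\ref{beta})$ is the same as
\begin{equation*}
\frac{d}{dt}\bigl(\dot X+\beta\sqrt s\,\nabla f(X)\bigr)=-2\sqrt\mu\,\dot X-(1+\sqrt{\mu s})\,\nabla f(X),
\end{equation*}
so that the vector $w:=\dot X+2\sqrt\mu\,(X-x^*)+\beta\sqrt s\,\nabla f(X)$ occurring inside the last square of $(\ref{Energy functional})$ satisfies the clean identity $\dot w=-(1+\sqrt{\mu s})\nabla f(X)$.

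First I would differentiate $(\ref{Energy functional})$ term by term. The potential part yields $(1+\sqrt{\mu s})\langle\nabla f(X),\dot X\rangle$; the part $\frac14\|\dot X\|^2$ yields, after replacing $\ddot X$ using $(\ref{beta})$, the expression $-\sqrt\mu\|\dot X\|^2-\frac{\beta\sqrt s}{2}\langle\dot X,\nabla^2 f(X)\dot X\rangle-\frac12(1+\sqrt{\mu s})\langle\dot X,\nabla f(X)\rangle$; and the part $\frac14\|w\|^2$ yields $-\frac12(1+\sqrt{\mu s})\langle w,\nabla f(X)\rangle$ by the identity above. Adding the three, the terms in $\langle\nabla f(X),\dot X\rangle$ cancel exactly and one is left with
\begin{equation*}
\frac{d\mathcal E_\beta}{dt}=-\sqrt\mu\|\dot X\|^2-\frac{\beta\sqrt s}{2}\langle\dot X,\nabla^2 f(X)\dot X\rangle-\sqrt\mu\,(1+\sqrt{\mu s})\langle X-x^*,\nabla f(X)\rangle-\frac{(1+\sqrt{\mu s})\beta\sqrt s}{2}\|\nabla f(X)\|^2 .
\end{equation*}
The Hessian term is nonpositive because $f$ is convex (so $\nabla^2 f(X)$ is positive semidefinite), and it may be discarded.

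Next I would expand $-\frac{\sqrt\mu}{4}\mathcal E_\beta$, in particular the square $\|w\|^2=\|\dot X\|^2+4\mu\|X-x^*\|^2+\beta^2 s\|\nabla f(X)\|^2+4\sqrt\mu\langle\dot X,X-x^*\rangle+2\beta\sqrt s\langle\dot X,\nabla f(X)\rangle+4\sqrt\mu\beta\sqrt s\langle X-x^*,\nabla f(X)\rangle$, and compare it with the expression above. The $f(X)-f(x^*)$ and $\|X-x^*\|^2$ terms thereby produced, together with the $(\sqrt\mu+\mu\sqrt s)(f(X)-f(x^*))$ piece of $\Delta_\beta$, are absorbed using $\mu$-strong convexity in the forms $\langle\nabla f(X),X-x^*\rangle\ge f(X)-f(x^*)+\frac\mu2\|X-x^*\|^2$ and $f(X)-f(x^*)\ge\frac\mu2\|X-x^*\|^2$, applied to the term $-\sqrt\mu(1+\sqrt{\mu s})\langle X-x^*,\nabla f(X)\rangle$. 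The remaining indefinite cross terms $\langle\dot X,X-x^*\rangle$ and $\langle\dot X,\nabla f(X)\rangle$ (and the one-sided term $\langle X-x^*,\nabla f(X)\rangle\ge0$) are handled by weighted Young inequalities, the weights being tuned so that the $\|\dot X\|^2$ they create stays a small fraction of the available $\sqrt\mu\|\dot X\|^2$ and the $\|\nabla f(X)\|^2$ they create stays within the $\frac12(1+\sqrt{\mu s})\beta\sqrt s\|\nabla f(X)\|^2$ budget uniformly in $s$.

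I expect the main obstacle to be precisely this bookkeeping: one must choose the split of the strong-convexity estimates and the Young weights so that \emph{every} residual coefficient — an interlocking mixture of $O(\sqrt\mu)$, $O(\mu)$, $O(\mu^{3/2})$ and $O(\sqrt s)$ quantities — lands on the correct side of zero, after which the surviving terms are exactly $-\Delta_\beta$ and $(\ref{4.2})$ is proved. The hypothesis $0\le\beta\le1$ then enters only in the passage from $(\ref{4.2})$ to $(\ref{Key step for the Theorem})$: this requires $\Delta_\beta\ge0$, and the sole term of $\Delta_\beta$ whose sign is not automatic is the coefficient $8\beta s\sqrt\mu-3\beta^2 s\sqrt\mu=s\sqrt\mu\,\beta(8-3\beta)$ of $\|\nabla f(X)\|^2$, which is nonnegative exactly for $0\le\beta\le\frac83$, hence on the range of interest. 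Granting that, $\Delta_\beta$ is a sum of three nonnegative quantities, so $-\Delta_\beta\le0$ and $(\ref{Key step for the Theorem})$ follows at once from $(\ref{4.2})$.
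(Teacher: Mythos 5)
Your proposal is correct and follows essentially the same route as the paper: the exact cancellation of the $\langle\nabla f(X),\dot X\rangle$ terms via $\dot w=-(1+\sqrt{\mu s})\nabla f(X)$, discarding the Hessian term by convexity, splitting the strong-convexity bound on $\langle\nabla f(X),X-x^*\rangle$ to generate both $f(X)-f(x^*)$ and $\|X-x^*\|^2$, and finally comparing against an upper bound for $\mathcal E_\beta(t)$ itself. The only cosmetic difference is that the paper dispenses with your weighted-Young bookkeeping in one stroke via $\|a+b+c\|^2\leq 3\left(\|a\|^2+\|b\|^2+\|c\|^2\right)$ applied to $w$ (i.e.\ your Young inequalities with all weights equal to $1$), after which the coefficients of $\Delta_\beta$ drop out by direct comparison with no tuning required.
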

\begin{proof}
	The energy functional $(\ref{Energy functional})$ together with $(\ref{beta})$ give us 
	\begin{align*}
		\frac{d\mathcal E_\beta(t)}{dt} &=(1+\sqrt{\mu s})\left\langle\nabla f(X),\dot X \right\rangle+\frac{1}{2}\left\langle\dot X,-2\sqrt{\mu}\dot X-\beta\sqrt{s}\nabla^2f(X)\dot X-(1+\sqrt{\mu s})\nabla f(X) \right\rangle\\
		&\qquad +\frac{1}{2}\left\langle\dot X+2\sqrt{\mu}(X-x^*)+\beta\sqrt{s}\nabla f(X),-(1+\sqrt{\mu s})\nabla f(X) \right\rangle\\
		&=-\sqrt{\mu}\left(\left\|\dot X \right\|^2+(1+\sqrt{\mu s})\left\langle\nabla f(X), X-x^* \right\rangle+\frac{\beta s}{2}\left\|\nabla f(X)\right\|^2\right)\\
		&\qquad -\frac{\beta\sqrt{s}}{2}\left(\|\nabla f(X)\|^2+\dot X^T \nabla^2f(X)\dot X \right)\\
		&\leq -\sqrt{\mu}\left(\|\dot X \|^2+(1+\sqrt{\mu s})\left\langle\nabla f(X), X-x^* \right\rangle+\frac{\beta s}{2}\left\|\nabla f(X)\right\|^2\right)
	\end{align*} 
	Also, by $\mu$-strong convexity of $f$, 
	\begin{equation*}
		\left\langle \nabla f(X),X-x^*\right\rangle\geq
		\begin{cases}
			f(X)-f(x^*)+\frac{\mu}{2}\|X-x^*\|^2\\
			\mu\|X-x^*\|^2
		\end{cases}
	\end{equation*}
	This gives us 
	\begin{align*}
		(1+\sqrt{\mu s})\left\langle\nabla f(X),X-x^* \right\rangle &\geq \frac{1+\sqrt{\mu s}}{2}\left\langle\nabla f(X),X-x^*\right\rangle+\frac{1}{2}\langle \nabla f(X),X-x^*\rangle\\
		&\geq \frac{1+\sqrt{\mu s}}{2}\left(f(X)-f(x^*)+\frac{\mu}{2}\|X-x^*\|^2 \right)+\frac{\mu}{2}\|X-x^*\|^2\\
		&\geq \frac{1+\sqrt{\mu s}}{2}\left(f(X)-f(x^*)\right)+\frac{3\mu}{4}\|X-x^*\|^2
	\end{align*}
	Hence, the derivative of Energy Functional can be bounded by 
	\begin{equation}\label{4.3}
		\frac{d\mathcal E_\beta(t)}{dt}\leq -\sqrt{\mu}\left(\frac{1+\sqrt{\mu s}}{2}(f(X)-f(x^*))+\|\dot X\|^2+\frac{3\mu}{4}\|X-x^*\|^2+\frac{\beta s}{2}\|\nabla f(X)\|^2 \right)
	\end{equation}
	Next, by Cauchy-Schwarz inequality, 
	\begin{equation*}
		\left\|2\sqrt{\mu}(X-x^*)+\dot X+\beta\sqrt{s}\nabla f(X)\right\|^2\leq  3\left(4\mu \|X-x^*\|^2+\|\dot X\|^2+\beta^2 s\|\nabla f(X)\|^2 \right)
	\end{equation*}
	from which we can deduce that 
	\begin{equation}\label{4.4}
		\mathcal E_\beta(t)\leq (1+\sqrt{\mu s})\left(f(X)-f(x^*)\right)+\|\dot X\|+3\mu \|X-x^*\|^2+\frac{3s\beta^2}{4}\|\nabla f(X)\|^2
	\end{equation}
	Finally, combining $(\ref{4.3} )$ and $(\ref{4.4})$ and we get the $(\ref{4.2})$. The $(\ref{Key step for the Theorem})$ holds since $\Delta_\beta\geq 0$. (Notice that $0\leq \beta\leq 1$ and $x^*$ is the minimizer)  
\end{proof}
\begin{proof}[Proof of Theorem \ref{Theorem of Convergence Rate}]
	By previous lemma, 
	\begin{equation*}
		\dot{\mathcal E}_\beta(t)\leq  -\frac{\sqrt{\mu}}{4}\mathcal E_\beta(t)\implies \frac{d}{dt}\left(\mathcal E_\beta(t)e^{\frac{\sqrt{\mu}}{4}t}\right)\leq 0\implies \mathcal E_\beta(t)\leq e^{-\frac{\sqrt{\mu}}{4}t}\mathcal E_\beta(0)
	\end{equation*}
	Noticing the initial condition $X(0)=x_0$ and $\dot X(0)=-\frac{2\sqrt{s}\nabla f(x_0)}{1+\sqrt{\mu s}}$, we get 
	\begin{align*}
		f(X)-f(x^*) &\leq e^{-\frac{\sqrt{\mu}}{4}t}\biggl[f(x_0)-f(x^*)+\frac{s}{(1+\sqrt{\mu s})^3}\|\nabla f(x_0)\|^2\\
		&+\frac{1}{4(1+\sqrt{\mu s})}\left\|2\sqrt{\mu}(x_0-x^*)-\frac{2-\beta-\beta\sqrt{\mu s}}{1+\sqrt{\mu s}}\cdot \sqrt{s}\nabla f(x_0)\right\|^2\biggr]
	\end{align*}
	Since $f\in \mathcal S_{\mu,L}^2$, 
	\begin{equation*}
		\|\nabla f(x_0)\|\leq L\|x_0-x^*\|\qquad\text{and}\qquad f(x_0)-f(x^*)\leq \frac{L}{2}\cdot \|x_0-x^*\|^2
	\end{equation*}
	Together with Cauchy-Schwartz inequality, 
	\begin{align*}
		f(X)-f(x^*) &\leq \left[f(x_0)-f(x^*)+\frac{2+(2-\beta-\beta\sqrt{\mu s})^2}{2(1+\sqrt{\mu s})^3}\cdot s\|\nabla f(x_0)\|^2+\frac{2\mu}{1+\sqrt{\mu s}}\|x_0-x^*\|^2\right]e^{-\frac{\sqrt{\mu}}{4}t}\\
		&\leq \left[\frac{L}{2}+\frac{2+(2-\beta-\beta\sqrt{\mu s})^2}{2(1+\sqrt{\mu s})^3}\cdot sL^2+\frac{2\mu}{1+\sqrt{\mu s}}\right]\|x_0-x^*\|^2e^{-\frac{\sqrt{\mu}}{4}t}\\
		&\leq \left[\frac{1}{2}+\frac{2+(2-\beta-\beta\sqrt{\mu s})^2}{2(1+\sqrt{\mu s})^3}+\frac{2\mu s}{1+\sqrt{\mu s}}\right]\cdot\frac{1}{s}\cdot \|x_0-x^*\|^2e^{-\frac{\sqrt{\mu}}{4}t}
	\end{align*}
	Now, by a little bit of analysis, under the assumption $\mu s\leq \mu/L\leq 1$, 
	\begin{equation*}
		\frac{1}{2}+\frac{2+(2-\beta-\beta\sqrt{\mu s})^2}{2(1+\sqrt{\mu s})^3}+\frac{2\mu s}{1+\sqrt{\mu s}}\leq \frac{3+(2-\beta)^2}{2}
	\end{equation*}
	This completes the proof of the Theorem.
\end{proof}
\section{Convergence Rate of discrete method}
\subsection{Discrete Energy Functional}
We first write the $(\ref{Discrete Method})$ as  
\begin{align}\label{Phase Space Representation II}
\begin{split}
	x_k-x_{k-1} &=\sqrt{s}v_{k-1}\\
	v_k-v_{k-1} &=-\frac{2\sqrt{\mu s}}{1-\sqrt{\mu s}}v_k-\beta\sqrt{s}(\nabla f(x_k)-\nabla f(x_{k-1}))-\frac{1+\sqrt{\mu s}}{1-\sqrt{\mu s}}\cdot \sqrt{s}\nabla f(x_k)
\end{split}
\end{align}
in the position variable $x_k$ and the velocity variable $v_k$ that is defined as 
\begin{equation*}
	v_k=\frac{x_{k+1}-x_k}{\sqrt{s}}
\end{equation*}
The initial velocity is 
\begin{equation*}
	v_0=-\frac{2\sqrt{s}}{1+\sqrt{\mu s}}\nabla f(x_0)
\end{equation*}
Next, we construct the $\beta$ discrete-time energy functional 
\begin{align}\label{beta discrete-time energy functional}
	\mathcal E_\beta(k) &=\underbrace{\frac{1+\sqrt{\mu s}}{1-\sqrt{\mu s}} (f(x_k)-f(x^*))}_{\mathbf I}+\underbrace{\frac{1}{4}\|v_k\|^2}_{\mathbf {II}}+\underbrace{\frac{1}{4}\left\|v_k+\frac{2\sqrt{\mu}}{1-\sqrt{\mu s}}(x_{k}-x^*)+\beta\sqrt{s}\nabla f(x_k)\right\|^2}_{\mathbf{III}}\nonumber \\
	&\qquad -\underbrace{\frac{\beta s\|\nabla f(x_k)\|^2}{2(1-\sqrt{\mu s})}}_{\textbf{negative term}}
\end{align}
\subsection{Lemmata}
\begin{Lemma}
	For $f\in \mathcal S_{\mu,L}^{1}(\mathbb R^n)$,
	\begin{align*}
  		\mathcal E_\beta(k)&\leq \left(\frac{1}{1-\sqrt{\mu s}}+\frac{\beta^2 Ls}{2}\right)(f(x_k)-f(x^*))+\frac{1+\sqrt{\mu s}+\mu s}{(1-\sqrt{\mu s})^2}\|v_k\|^2\\
  		&+\frac{3\mu}{(1-\sqrt{\mu s})^2}\|x_{k}-x^*\|^2+\frac{\sqrt{\mu s}}{1-\sqrt{\mu s}}\left[f(x_k)-f(x^*)-\left(\frac{\beta^2 s\sqrt{\mu s}-(\beta^2-\beta )s}{2\sqrt{\mu s}}\right)\|\nabla f(x_k)\|^2 \right]
  	\end{align*}
\end{Lemma}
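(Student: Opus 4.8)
The plan is to work directly from the definition (\ref{beta discrete-time energy functional}) of $\mathcal E_\beta(k)$, bounding the cross term $\mathbf{III}$ from above via the elementary estimate $\|a+b+c\|^2\leq 3(\|a\|^2+\|b\|^2+\|c\|^2)$ applied with $a=v_k$, $b=\frac{2\sqrt{\mu}}{1-\sqrt{\mu s}}(x_k-x^*)$ and $c=\beta\sqrt{s}\,\nabla f(x_k)$, which gives
\[
\mathbf{III}\leq \frac34\|v_k\|^2+\frac{3\mu}{(1-\sqrt{\mu s})^2}\|x_k-x^*\|^2+\frac{3\beta^2 s}{4}\|\nabla f(x_k)\|^2 .
\]
Adding $\mathbf I+\mathbf{II}$ and subtracting the negative term of (\ref{beta discrete-time energy functional}), I would then merge the velocity contributions $\frac14\|v_k\|^2+\frac34\|v_k\|^2=\|v_k\|^2$ and enlarge its coefficient to $\frac{1+\sqrt{\mu s}+\mu s}{(1-\sqrt{\mu s})^2}$ (legitimate, since that quotient is $\geq 1$); the $\|x_k-x^*\|^2$ term already carries the desired coefficient $\frac{3\mu}{(1-\sqrt{\mu s})^2}$; and what is left is $\frac{1+\sqrt{\mu s}}{1-\sqrt{\mu s}}(f(x_k)-f(x^*))$ together with $\big(\frac{3\beta^2 s}{4}-\frac{\beta s}{2(1-\sqrt{\mu s})}\big)\|\nabla f(x_k)\|^2$.

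The rest is algebraic repackaging. I would split $\frac{1+\sqrt{\mu s}}{1-\sqrt{\mu s}}=\frac{1}{1-\sqrt{\mu s}}+\frac{\sqrt{\mu s}}{1-\sqrt{\mu s}}$ and invoke $L$-smoothness in the co-coercivity form $\|\nabla f(x_k)\|^2\leq 2L(f(x_k)-f(x^*))$ to trade exactly $\frac{\beta^2 s}{4}\|\nabla f(x_k)\|^2$ for $\frac{\beta^2 Ls}{2}(f(x_k)-f(x^*))$, producing the $\frac{\beta^2 Ls}{2}$ correction in the function-value coefficient. The residual gradient coefficient is then $\frac{\beta^2 s}{2}-\frac{\beta s}{2(1-\sqrt{\mu s})}$, and the identity
\[
\frac{\beta^2 s}{2}-\frac{\beta s}{2(1-\sqrt{\mu s})}=-\frac{\sqrt{\mu s}}{1-\sqrt{\mu s}}\cdot\frac{\beta^2 s\sqrt{\mu s}-(\beta^2-\beta)s}{2\sqrt{\mu s}}
\]
combines with the surviving $\frac{\sqrt{\mu s}}{1-\sqrt{\mu s}}(f(x_k)-f(x^*))$ to reconstruct exactly the bracketed expression in the statement.

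The step I expect to require the most care is the last displayed identity: it is an equality, so there is no slack, and one must keep precise track of the $(1-\sqrt{\mu s})$ denominators and move precisely $\frac{\beta^2 s}{4}\|\nabla f(x_k)\|^2$ --- no more, no less --- into the function-value term for the coefficients to collapse to the claimed closed form. Everything else is routine: the Cauchy--Schwarz step on $\mathbf{III}$, the crude monotonicity $1\leq\frac{1+\sqrt{\mu s}+\mu s}{(1-\sqrt{\mu s})^2}$, and the $L$-smoothness inequality, which needs only $f\in\mathcal F^1_L$ and therefore holds for every $f\in\mathcal S^1_{\mu,L}(\mathbb R^n)$; strong convexity is not used in this lemma.
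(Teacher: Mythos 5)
Your proposal is correct and follows essentially the same route as the paper: the three-term Cauchy--Schwarz bound on $\mathbf{III}$, the estimate $\|\nabla f(x_k)\|^2\leq 2L(f(x_k)-f(x^*))$ applied to exactly the $\frac{\beta^2 s}{4}$ portion of the gradient coefficient, and the same algebraic regrouping into the bracketed $\frac{\sqrt{\mu s}}{1-\sqrt{\mu s}}[\cdots]$ term. The only cosmetic difference is that the paper inflates the velocity coefficient to $\frac{3}{4}\bigl(\frac{1+\sqrt{\mu s}}{1-\sqrt{\mu s}}\bigr)^2$ before adding $\mathbf{II}$, while you add first and then enlarge $1$ to $\frac{1+\sqrt{\mu s}+\mu s}{(1-\sqrt{\mu s})^2}$; both yield the identical final coefficient.
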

\begin{proof}
	In the definition of $\beta$ discrete-time energy functional (\ref{beta discrete-time energy functional}), by the Cauchy-Scharwz inequality, we have 
  	\begin{align*}
  	\mathbf{III} &=\frac{1}{4}\left\|v_k+\frac{2\sqrt{\mu}}{1-\sqrt{\mu s}}(x_{k}-x^*)+\beta\sqrt{s}\nabla f(x_k) \right\|^2\\
  	&\leq \frac{3}{4}\left[\left(\frac{1+\sqrt{\mu s}}{1-\sqrt{\mu s}}\right)^2\|v_k\|^2+\frac{4\mu }{(1-\sqrt{\mu s})^2}\|x_{k}-x^*\|^2+\beta^2 s\|\nabla f(x_k)\|^2 \right]
  	\end{align*} 
  	Notice that $\|\nabla f(x_k)\|^2\leq 2L\left(f(x_k)- f(x^*)\right)$,  
  	\begin{align*}
  		\frac{3\beta^2 s}{4}\|\nabla f(x_k)\|^2-\frac{\beta s\|\nabla f(x_k)\|^2}{2(1-\sqrt{\mu s})}&=\frac{\beta^2 s}{4}\|\nabla f(x_k)\|^2+\frac{\beta^2 s}{2}\|\nabla f(x_k)\|^2-\frac{\beta s\|\nabla f(x_k)\|^2}{2(1-\sqrt{\mu s})}\\
  		&\leq \frac{\beta^2 L s}{2}(f(x_k)-f(x^*))-\frac{\beta^2 s\sqrt{\mu s}-(\beta^2-\beta)s}{2(1-\sqrt{\mu s})}\cdot \|\nabla f(x_k)\|^2
  	\end{align*}
  	for $f\in \mathcal S^1_{\mu,L}(\mathbb R^n)$, which gives us the following estimate, 
  	\begin{align*}
  		\mathcal E_\beta(k)&\leq \left(\frac{1}{1-\sqrt{\mu s}}+\frac{\beta^2 Ls}{2}\right)(f(x_k)-f(x^*))+\frac{1+\sqrt{\mu s}+\mu s}{(1-\sqrt{\mu s})^2}\|v_k\|^2\\
  		&+\frac{3\mu}{(1-\sqrt{\mu s})^2}\|x_{k}-x^*\|^2+\frac{\sqrt{\mu s}}{1-\sqrt{\mu s}}\left[f(x_k)-f(x^*)-\left(\frac{\beta^2 s\sqrt{\mu s}-(\beta^2-\beta )s}{2\sqrt{\mu s}}\right)\|\nabla f(x_k)\|^2 \right]
  	\end{align*}
\end{proof}
\begin{Lemma}\label{Estimate on the difference of energy functional}
	For $f\in\mathcal S_{\mu,L}^1(\mathbb R^n)$, 
	\begin{align*}
		\mathcal E_\beta(k+1)-\mathcal E_\beta(k) &\leq -\frac{\sqrt{\mu s}}{1-\sqrt{\mu s}}\left(\frac{1+\sqrt{\mu s}}{1-\sqrt{\mu s}}\cdot\langle \nabla f(x_{k+1}), x_{k+1}-x^*\rangle+\|v_{k+1}\|^2\right)\\
		&\ \ \ \ +\frac{1}{2}\left(\frac{1+\sqrt{\mu s}}{1-\sqrt{\mu s}}\right)\cdot s\cdot \frac{(1+\beta)\sqrt{\mu s}+(1-\beta)}{1-\sqrt{\mu s}}\cdot \|\nabla f(x_{k+1})\|^2\\
		&\ \ \ \ -\frac{1}{2L}\left(\frac{\beta-\beta\sqrt{\mu s}}{1+\sqrt{\mu s}}+\frac{1+\sqrt{\mu s}}{1-\sqrt{\mu s}}\right)\|\nabla f(x_{k+1})-\nabla f(x_k)\|^2\\
		&\ \ \ \ +\frac{\beta s}{2}\left(\frac{1+\sqrt{\mu s}}{1-\sqrt{\mu s}}+\frac{1-\sqrt{\mu s}}{1+\sqrt{\mu s}} \right)\| \nabla f(x_{k+1})-\nabla f(x_k)\|^2\\ 
		\end{align*}
	\end{Lemma}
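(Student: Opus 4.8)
The plan is to mimic the continuous-time Lyapunov computation of Section 4, working termwise on the four pieces $\mathbf I,\mathbf{II},\mathbf{III}$ and the negative term of $\mathcal E_\beta(k)$ in (\ref{beta discrete-time energy functional}), and feeding in the phase-space recursion (\ref{Phase Space Representation II}) shifted by one index,
\begin{align*}
x_{k+1}-x_k &= \sqrt s\, v_k,\\
v_{k+1}-v_k &= -\frac{2\sqrt{\mu s}}{1-\sqrt{\mu s}}\,v_{k+1} - \beta\sqrt s\bigl(\nabla f(x_{k+1})-\nabla f(x_k)\bigr) - \frac{1+\sqrt{\mu s}}{1-\sqrt{\mu s}}\,\sqrt s\,\nabla f(x_{k+1}).
\end{align*}
Throughout I will use the elementary identity $\|a\|^2-\|b\|^2 = 2\langle a,a-b\rangle - \|a-b\|^2$ applied with $a$ the index-$(k+1)$ vector, so that the discarded square $-\|a-b\|^2$ is always available as a nonpositive reservoir.

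First, for $\mathbf I$: write $\mathbf I(k+1)-\mathbf I(k)=\frac{1+\sqrt{\mu s}}{1-\sqrt{\mu s}}\bigl(f(x_{k+1})-f(x_k)\bigr)$ and bound $f(x_{k+1})-f(x_k)$ by the standard descent/co-coercivity inequality for $f\in\mathcal F^1_L$, namely $f(x_{k+1})-f(x_k)\le \langle\nabla f(x_{k+1}),x_{k+1}-x_k\rangle - \frac{1}{2L}\|\nabla f(x_{k+1})-\nabla f(x_k)\|^2$; then substitute $x_{k+1}-x_k=\sqrt s\,v_k=\sqrt s\,v_{k+1}-\sqrt s(v_{k+1}-v_k)$ and replace $v_{k+1}-v_k$ from the recursion. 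This is the source of the $-\frac{1}{2L}\cdot\frac{1+\sqrt{\mu s}}{1-\sqrt{\mu s}}\|\nabla f(x_{k+1})-\nabla f(x_k)\|^2$ contribution on the third line, together with part of the $\beta s$-weighted gradient-difference term and several $\langle v_{k+1},\cdot\rangle$ cross terms to be tracked.

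Next, for $\mathbf{II}$ and $\mathbf{III}$: for $\mathbf{II}(k+1)-\mathbf{II}(k)=\tfrac14(\|v_{k+1}\|^2-\|v_k\|^2)$ take $a=v_{k+1}$, $a-b=v_{k+1}-v_k$ from the recursion; this yields a friction term $-\tfrac{\sqrt{\mu s}}{1-\sqrt{\mu s}}\|v_{k+1}\|^2$, cross terms $\langle v_{k+1},\nabla f(x_{k+1})\rangle$ and $\langle v_{k+1},\nabla f(x_{k+1})-\nabla f(x_k)\rangle$, and a negative $-\tfrac14\|v_{k+1}-v_k\|^2$. For $\mathbf{III}$, set $w_k:=v_k+\frac{2\sqrt\mu}{1-\sqrt{\mu s}}(x_k-x^*)+\beta\sqrt s\,\nabla f(x_k)$; a short calculation using both lines of the recursion shows the $\beta$-Hessian-type term cancels, leaving $w_{k+1}-w_k=-\frac{2\sqrt{\mu s}}{1-\sqrt{\mu s}}(v_{k+1}-v_k)-\frac{1+\sqrt{\mu s}}{1-\sqrt{\mu s}}\sqrt s\,\nabla f(x_{k+1})$, the exact discrete analogue of $\tfrac{d}{dt}w=-(1+\sqrt{\mu s})\nabla f(X)$ from Section 4. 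Expanding $\tfrac14(\|w_{k+1}\|^2-\|w_k\|^2)=\tfrac12\langle w_{k+1},w_{k+1}-w_k\rangle-\tfrac14\|w_{k+1}-w_k\|^2$ and substituting $w_{k+1}=v_{k+1}+\frac{2\sqrt\mu}{1-\sqrt{\mu s}}(x_{k+1}-x^*)+\beta\sqrt s\,\nabla f(x_{k+1})$ produces the pivotal term $-\frac{\sqrt{\mu s}}{1-\sqrt{\mu s}}\cdot\frac{1+\sqrt{\mu s}}{1-\sqrt{\mu s}}\langle\nabla f(x_{k+1}),x_{k+1}-x^*\rangle$ on the first line, further $\langle v_{k+1},\cdot\rangle$ cross terms, a $\|\nabla f(x_{k+1})\|^2$ contribution, and $\beta s$-weighted $\|\nabla f(x_{k+1})-\nabla f(x_k)\|^2$ pieces.

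Finally, add the difference $-\frac{\beta s}{2(1-\sqrt{\mu s})}\bigl(\|\nabla f(x_{k+1})\|^2-\|\nabla f(x_k)\|^2\bigr)$ of the negative term, expand it via the same identity, and collect. The bookkeeping is the crux of the argument: the coefficients of $\|v_{k+1}\|^2$ from $\mathbf{II}$ and $\mathbf{III}$ must combine to exactly $-\frac{\sqrt{\mu s}}{1-\sqrt{\mu s}}$, the $\|x_{k+1}-x^*\|^2$ terms must cancel in full (as in the continuous case), and the $\langle v_{k+1},\nabla f(x_{k+1})\rangle$ cross terms must cancel among the pieces, while the residual $\langle v_{k+1},\nabla f(x_{k+1})-\nabla f(x_k)\rangle$ terms are absorbed against the $-\tfrac14\|v_{k+1}-v_k\|^2$ reservoir (after re-expressing $v_{k+1}-v_k$ through the recursion and using Cauchy--Schwarz). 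What survives is precisely the asserted four-line bound. The main obstacle is ensuring these cancellations are exact and that no stray positive term of size $O(s)$ or $O(\sqrt s)$ is left behind; this is exactly what forces the particular shape of $\mathcal E_\beta(k)$, including the negative gradient-norm correction, and is the discrete counterpart of the sign analysis of $\Delta_\beta$ in Section 4.
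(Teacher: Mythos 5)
Your plan follows the same route as the paper's own (sketched) proof, which in turn follows \cite{HRODE}, Appendix B.2.2: difference $\mathbf I,\mathbf{II},\mathbf{III}$ and the negative term separately, feed in the shifted recursion $(\ref{Phase Space Representation II})$, expand via $\|a\|^2-\|b\|^2=2\langle a,a-b\rangle-\|a-b\|^2$ anchored at index $k+1$, and control $\Delta_{\mathbf I}$ with the co-coercivity inequality. Like the paper, you stop before the final collection of terms, so the decisive cancellations are asserted rather than exhibited in both texts.

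There is, however, one concrete point where your computation diverges from the paper's displayed steps and where the bookkeeping you defer is genuinely not a formality. Taking $\mathbf{III}$ anchored at $x_k$, literally as written in $(\ref{beta discrete-time energy functional})$, your identity
\begin{equation*}
w_{k+1}-w_k=-\frac{2\sqrt{\mu s}}{1-\sqrt{\mu s}}\,(v_{k+1}-v_k)-\frac{1+\sqrt{\mu s}}{1-\sqrt{\mu s}}\,\sqrt{s}\,\nabla f(x_{k+1})
\end{equation*}
is correct, but it is \emph{not} the clean discrete analogue of $\dot w=-(1+\sqrt{\mu s})\nabla f(X)$: substituting the recursion into the residual $-\frac{2\sqrt{\mu s}}{1-\sqrt{\mu s}}(v_{k+1}-v_k)$ reintroduces the $\beta$ gradient-difference term you claimed had cancelled, together with extra $\|v_{k+1}\|^2$ and $\langle v_{k+1},\nabla f(x_{k+1})\rangle$ contributions that your subsequent cancellation claims do not account for. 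The paper's $\Delta_{\mathbf{III}}$ is in fact computed for the functional anchored at $x_{k+1}$ (as in \cite{HRODE}); one checks that $\tilde w_k:=v_k+\frac{2\sqrt{\mu}}{1-\sqrt{\mu s}}(x_{k+1}-x^*)+\beta\sqrt{s}\nabla f(x_k)$ satisfies the exact telescoping $\tilde w_{k+1}-\tilde w_k=-\frac{1+\sqrt{\mu s}}{1-\sqrt{\mu s}}\sqrt{s}\nabla f(x_{k+1})$ and $\tilde w_{k+1}=\frac{1+\sqrt{\mu s}}{1-\sqrt{\mu s}}v_{k+1}+\frac{2\sqrt{\mu}}{1-\sqrt{\mu s}}(x_{k+1}-x^*)+\beta\sqrt{s}\nabla f(x_{k+1})$, which is precisely the inner product appearing in the paper and is what produces the first line of the lemma with the stated coefficient. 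So either anchor $\mathbf{III}$ at $x_{k+1}$ (the choice under which your cancellation claims become literally true) or track the extra $(v_{k+1}-v_k)$ terms explicitly; as written, your assertion that ``what survives is precisely the asserted four-line bound'' does not follow from the identities you state. (A smaller quibble: no $\|x_{k+1}-x^*\|^2$ terms are ever generated in this expansion, so there is nothing to ``cancel in full''.)
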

\begin{proof}
	The proof of this Lemma is only a slight variation of argument in \cite{HRODE} , Appendix B.2.2 so here we only give the first several steps in order to illustrate the difference.  Recall the $\beta$ discrete time energy functional $(\ref{beta discrete-time energy functional})$ 
\begin{align*}
	\mathcal E_\beta(k) &=\underbrace{\frac{1+\sqrt{\mu s}}{1-\sqrt{\mu s}} (f(x_k)-f(x^*))}_{\mathbf I}+\underbrace{\frac{1}{4}\|v_k\|^2}_{\mathbf {II}}+\underbrace{\frac{1}{4}\left\|v_k+\frac{2\sqrt{\mu}}{1-\sqrt{\mu s}}(x_{k}-x^*)+\beta\sqrt{s}\nabla f(x_k)\right\|^2}_{\mathbf{III}}\nonumber \\
	&\qquad -\underbrace{\frac{\beta s\|\nabla f(x_k)\|^2}{2(1-\sqrt{\mu s})}}_{\textbf{negative term}}
\end{align*}
Let $\Delta_{\mathbf I}, \Delta_{\mathbf{II}}$ and $\Delta_{\mathbf{III}}$ be the difference between $\mathbf{I},\mathbf{II}$ and $\mathbf{III}$ respectively. For the first part, same as in \cite{HRODE} , Appendix $B.2.2$
\begin{align*}
	\Delta_{\mathbf{I}} &=\frac{1+\sqrt{\mu s}}{1-\sqrt{\mu s}} (f(x_{k+1})-f(x^*))-\frac{1+\sqrt{\mu s}}{1-\sqrt{\mu s}} (f(x_k)-f(x^*))\\
	&\leq \left(\frac{1+\sqrt{\mu s}}{1-\sqrt{\mu s}}\right)\sqrt{s}\langle \nabla f(x_{k+1}), v_k\rangle-\frac{1}{2L}\left(\frac{1+\sqrt{\mu s}}{1-\sqrt{\mu s}}\right)\|\nabla f(x_{k+1})-\nabla f(x_k)\|^2
\end{align*}
For the second part, by using $(\ref{Phase Space Representation II})$, 
\begin{align*}
	\Delta_{\mathbf{II}} &=\frac{1}{4}\|v_{k+1}\|^2-\frac{1}{4}\|v_k\|^2\\
	&=\frac{1}{2}\langle v_{k+1}-v_k, v_{k+1}\rangle-\frac{1}{4}\|v_{k+1}-v_k\|^2\\
	&=-\frac{\sqrt{\mu s}}{1-\sqrt{\mu s}}\|v_{k+1}\|^2-\frac{\beta\sqrt{s}}{2}\langle \nabla f(x_{k+1})-\nabla f(x_k),v_{k+1}\rangle\\
	&-\frac{1+\sqrt{\mu s}}{1-\sqrt{\mu s}}\cdot\frac{\sqrt{s}}{2}\langle \nabla f(x_{k+1}), v_{k+1}\rangle-\frac{1}{4}\|v_{k+1}-v_k\|^2\\
	&=-\frac{\sqrt{\mu s}}{1-\sqrt{\mu s}}\|v_{k+1}\|^2-\frac{\beta\sqrt{s}}{2}\cdot\frac{1-\sqrt{\mu s}}{1+\sqrt{\mu s}}\langle \nabla f(x_{k+1})-\nabla f(x_k),v_{k+1}\rangle\\
	&+\frac{1-\sqrt{\mu s}}{1+\sqrt{\mu s}}\cdot\frac{\beta s}{2}\|\nabla f(x_{k+1})-\nabla f(x_k)\|^2+\frac{\beta s}{2}\langle \nabla f(x_{k+1})-\nabla f(x_k),\nabla f(x_{k+1})\rangle\\
	&-\frac{1+\sqrt{\mu s}}{1-\sqrt{\mu s}}\cdot\frac{\sqrt{s}}{2}\langle \nabla f(x_{k+1}), v_{k+1}\rangle-\frac{1}{4}\|v_{k+1}-v_k\|^2
\end{align*}
For the third part, 
\begin{align*}
	&\Delta_{\mathbf{III}}\\
	&=\frac{1}{4}\left\|v_{k+1}+\frac{2\sqrt{\mu}}{1-\sqrt{\mu s}}(x_{k+1}-x^*)+\beta\sqrt{s}\nabla f(x_{k+1})\right\|^2-\frac{1}{4}\left\|v_{k}+\frac{2\sqrt{\mu}}{1-\sqrt{\mu s}}(x_{k}-x^*)+\beta\sqrt{s}\nabla f(x_{k})\right\|^2\\
	&=\frac{1}{2}\left\langle-\frac{1+\sqrt{\mu s}}{1-\sqrt{\mu s}}\sqrt{s}\nabla f(x_{k+1}), \frac{1+\sqrt{\mu s}}{1-\sqrt{\mu s}}v_{k+1}+\frac{2\sqrt{\mu}}{1-\sqrt{\mu s}}(x_{k+1}-x^*)+\beta\sqrt{s}\nabla f(x_{k+1})\right\rangle\\
	&-\frac{1}{4}\left(\frac{1+\sqrt{\mu s}}{1-\sqrt{\mu s}}\right)^2 s\|\nabla f(x_{k+1})\|^2\\
		&=-\frac{\sqrt{\mu s}}{1-\sqrt{\mu s}}\frac{1+\sqrt{\mu s}}{1-\sqrt{\mu s}}\langle \nabla f(x_{k+1}), x_{k+1}-x^*\rangle-\frac{1}{2}\left(\frac{1+\sqrt{\mu s}}{1-\sqrt{\mu s}}\right)^2\sqrt{s}\langle \nabla f(x_{k+1}),v_{k+1}\rangle\\
		&-\frac{1}{2}\left(\frac{1+\sqrt{\mu s}}{1-\sqrt{\mu s}}\right)\beta s\|\nabla f(x_{k+1})\|^2-\frac{1}{4}\left(\frac{1+\sqrt{\mu s}}{1-\sqrt{\mu s}}\right)^2s\|\nabla f(x_{k+1})\|^2
\end{align*}
The rest of the argument on estimating the difference $\mathcal E_\beta(k+1)-\mathcal E_\beta(k)$ follows the same method as in \cite{HRODE} so here we do not go into further details. 
\end{proof}
\begin{Rem}
		Notice that for the last two terms above, 
		\begin{align*}
			&\ \ \ \ \left[\frac{\beta s}{2}\left(\frac{1+\sqrt{\mu s}}{1-\sqrt{\mu s}}+\frac{1-\sqrt{\mu s}}{1+\sqrt{\mu s}}\right)-\frac{1}{2L}\left(\frac{\beta-\beta\sqrt{\mu s}}{1+\sqrt{\mu s}}+\frac{1+\sqrt{\mu s}}{1-\sqrt{\mu s}} \right)\right]\|\nabla f(x_{k+1})-\nabla f(x_k)\|^2\\
			&\leq\left[\frac{s}{2}\left(\frac{1+\sqrt{\mu s}}{1-\sqrt{\mu s}}\right)+\frac{\beta s}{2}\left(\frac{1-\sqrt{\mu s}}{1+\sqrt{\mu s}}\right)-\frac{\beta}{2L}\left(\frac{1-\sqrt{\mu s}}{1+\sqrt{\mu s}}\right)-\frac{1}{2L}\left(\frac{1+\sqrt{\mu s}}{1-\sqrt{\mu s}}\right)\right]\|\nabla f(x_{k+1})-\nabla f(x_k)\|^2 \\
			&=\frac{1}{2}\left(s-\frac{1}{L}\right)\left[\frac{1+\sqrt{\mu s}}{1-\sqrt{\mu s}}+\beta\cdot\frac{1-\sqrt{\mu s}}{1+\sqrt{\mu s}} \right]\|\nabla f(x_{k+1})-\nabla f(x_k)\|^2\\
		\end{align*}
		Therefore, under the assumption that $s\leq\frac{1}{L}$,
		\begin{align*}
		\mathcal E_\beta(k+1)-\mathcal E_\beta(k) &\leq -\frac{\sqrt{\mu s}}{1-\sqrt{\mu s}}\left(\frac{1+\sqrt{\mu s}}{1-\sqrt{\mu s}}\cdot\langle \nabla f(x_{k+1}), x_{k+1}-x^*\rangle+\|v_{k+1}\|^2\right)\\
		&\ \ \ \ +\frac{1}{2}\left(\frac{1+\sqrt{\mu s}}{1-\sqrt{\mu s}}\right)\cdot s\cdot \frac{(1+\beta)\sqrt{\mu s}+(1-\beta)}{1-\sqrt{\mu s}}\cdot \|\nabla f(x_{k+1})\|^2
		\end{align*}
	\end{Rem}
	\begin{Cor}
		If $s\leq\frac{1}{2L}\leq\frac{1}{L}$ and $f\in \mathcal S_{\mu,L}^1(\mathbb R^n)$ , then we have  
		\begin{align*}
  		&\mathcal E_\beta(k+1)-\mathcal E_\beta(k)\\
  		 &\leq -\sqrt{\mu s}\left\lbrace \frac{1}{(1-\sqrt{\mu s})^2}\left[1-2Ls\cdot\frac{(\beta-\beta^2)\mu s+(3+\beta^2-2\beta)\sqrt{\mu s}+2-2\beta}{2\sqrt{\mu s}}\right](f(x_{k+1})-f(x^*))\right\rbrace\\
			  &-\sqrt{\mu s}\left\lbrace\frac{\sqrt{\mu s}}{(1-\sqrt{\mu s})^2}\left[f(x_{k+1})-f(x^*)-\left(\frac{\beta^2 s\sqrt{\mu s}-(\beta^2-\beta)s}{2\sqrt{\mu s}}\right)\|\nabla f(x_{k+1})\|^2 \right]\right\rbrace\\
			  &-\sqrt{\mu s}\left\lbrace \frac{\mu}{2(1-\sqrt{\mu s})^2}\|x_{k+1}-x^*\|^2+\frac{1}{1-\sqrt{\mu s}}\|v_{k+1}\|^2\right\rbrace  	
	   \end{align*}
   \end{Cor}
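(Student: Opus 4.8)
The plan is to begin from the bound established in the Remark immediately above, which already holds under $s\le\tfrac1{2L}\le\tfrac1L$, and to re-package it term by term into the three braces of the claimed estimate. Writing $a=\sqrt{\mu s}$, that Remark reads
\[
\mathcal E_\beta(k+1)-\mathcal E_\beta(k)\le-\tfrac{a(1+a)}{(1-a)^2}\langle\nabla f(x_{k+1}),x_{k+1}-x^*\rangle-\tfrac{a}{1-a}\|v_{k+1}\|^2+\tfrac{s(1+a)((1+\beta)a+(1-\beta))}{2(1-a)^2}\|\nabla f(x_{k+1})\|^2.
\]
The coefficient of $\|v_{k+1}\|^2$ is already exactly $-\sqrt{\mu s}/(1-\sqrt{\mu s})$, so that block needs nothing; the whole task is to trade the inner-product term and the positive gradient-norm term for the remaining three groups, namely a multiple of $f(x_{k+1})-f(x^*)$, a multiple of $\|x_{k+1}-x^*\|^2$, and a residual multiple of $\|\nabla f(x_{k+1})\|^2$.

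The tools I would use are the three elementary consequences of $f\in\mathcal S^1_{\mu,L}(\mathbb R^n)$ (with $\nabla f(x^*)=0$): the two forms of strong convexity, $\langle\nabla f(x_{k+1}),x_{k+1}-x^*\rangle\ge (f(x_{k+1})-f(x^*))+\tfrac\mu2\|x_{k+1}-x^*\|^2$ and $\langle\nabla f(x_{k+1}),x_{k+1}-x^*\rangle\ge\mu\|x_{k+1}-x^*\|^2$; the $L$-smoothness bound $\|\nabla f(x_{k+1})\|^2\le 2L(f(x_{k+1})-f(x^*))$; and the co-coercivity inequality $\|\nabla f(x_{k+1})\|^2\le L\langle\nabla f(x_{k+1}),x_{k+1}-x^*\rangle$. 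I would first split the positive gradient-norm term of the Remark into the exact residual $\tfrac{\sqrt{\mu s}}{(1-\sqrt{\mu s})^2}\bigl(\tfrac{\beta^2 s\sqrt{\mu s}-(\beta^2-\beta)s}{2\sqrt{\mu s}}\bigr)\|\nabla f(x_{k+1})\|^2$ plus a remainder, absorb the remainder into the inner-product term through co-coercivity (so long as the claimed $f(x_{k+1})-f(x^*)$-coefficient is negative, which is the case once $\beta$ is not too small, the merged inner-product coefficient is still negative, using only $Ls<1$), then apply strong convexity to that merged term: splitting its weight between the two lower bounds yields the $f(x_{k+1})-f(x^*)$ and $\|x_{k+1}-x^*\|^2$ pieces, and any positive gradient-norm term still left over is converted into a multiple of $f(x_{k+1})-f(x^*)$ via $\|\nabla f(x_{k+1})\|^2\le 2L(f(x_{k+1})-f(x^*))$ — this is exactly where the correction factor $1-2Ls\cdot\frac{(\beta-\beta^2)\mu s+(3+\beta^2-2\beta)\sqrt{\mu s}+2-2\beta}{2\sqrt{\mu s}}$ in front of $f(x_{k+1})-f(x^*)$ originates.

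The weights in the two splittings are then pinned down by matching coefficients as identities in $\sqrt{\mu s}$ and $\beta$: the weight of the ``$f+\tfrac\mu2\|\cdot\|^2$'' lower bound is chosen to make the $f(x_{k+1})-f(x^*)$-coefficient equal to the claimed one, and the $\|x_{k+1}-x^*\|^2$-coefficient that then emerges turns out to be at least $\tfrac{\sqrt{\mu s}\,\mu}{2(1-\sqrt{\mu s})^2}$ precisely under the hypothesis $2Ls\le 1$; the surplus distance term and the higher-order slack hidden in $1+2\sqrt{\mu s}\ge 1$ are simply discarded. Collecting the four groups gives the displayed inequality. When $\beta$ is small the factor $1-2Ls\cdot\frac{(\beta-\beta^2)\mu s+(3+\beta^2-2\beta)\sqrt{\mu s}+2-2\beta}{2\sqrt{\mu s}}$ can become negative; then the corresponding right-hand side term is a nonnegative multiple of $f(x_{k+1})-f(x^*)$ and only weakens the bound, so one uses fewer of the elementary inequalities there (in particular no co-coercivity step) and discards slack — this is the same dichotomy in $\beta$ that underlies the phase transition of the main theorem.

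The part I expect to be the real obstacle is entirely computational: carrying out the coefficient matching, solving for the (parameter-dependent) splitting weights in closed form, and then checking that these weights lie in $[0,1]$ and that the $\|x_{k+1}-x^*\|^2$- and $\|\nabla f(x_{k+1})\|^2$-coefficient inequalities hold uniformly for $0\le\beta\le 1$ and $s\le\tfrac1{2L}$ — i.e., verifying the sign of a small number of low-degree polynomials in $\sqrt{\mu s}$ with $\beta$ as a parameter. Once the weights are fixed the estimate follows by direct substitution, the only quantitative input beyond strong convexity, $L$-smoothness and co-coercivity being $2Ls\le 1$.
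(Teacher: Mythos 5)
Your plan coincides with the paper's proof: it starts from the Remark's bound, splits the weight $\frac{1+\sqrt{\mu s}}{1-\sqrt{\mu s}}$ of $\langle\nabla f(x_{k+1}),x_{k+1}-x^*\rangle$ between the strong-convexity lower bound (which produces the $\|x_{k+1}-x^*\|^2$ term) and the smoothness lower bound $\langle\nabla f,x-x^*\rangle\ge f-f^*+\tfrac{1}{2L}\|\nabla f\|^2$, keeps the designated residual gradient term, and converts the remaining nonnegative multiple of $\|\nabla f(x_{k+1})\|^2$ into $f(x_{k+1})-f(x^*)$ via $\|\nabla f\|^2\le 2L(f-f^*)$, which is exactly where the bracket $1-2Ls\cdot(\cdots)$ arises. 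The only superfluous element is your case split on the sign of that bracket: the $2L$-conversion is applied to a term whose coefficient $(\beta-\beta^2)\mu s+(3+\beta^2-2\beta)\sqrt{\mu s}+2-2\beta$ is nonnegative for all $\beta\in[0,1]$, so the single chain of inequalities remains valid even when the resulting bracket is negative.
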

	\begin{proof}
		$f\in \mathcal S_{\mu,L}^1(\mathbb R^n)$, together with the inequality
		\begin{align*}
			\begin{cases}
				f(x^*)\geq f(x_{k+1})+\langle \nabla f(x_{k+1}),x^*-x_{k+1}\rangle+\frac{1}{2L}\|\nabla f(x_{k+1})\|^2\\
				f(x^*)\geq f(x_{k+1})+\langle \nabla f(x_{k+1}), x^*-x_{k+1}\rangle+\frac{\mu}{2}\|x_{k+1}-x^*\|^2\\
				1/L\geq 1/2L \geq s
			\end{cases}
		\end{align*}
		We have that 
		\begin{align*}
			&\mathcal E_\beta(k+1)-\mathcal E_\beta(k)\\
			 &\leq -\frac{\sqrt{\mu s}}{1-\sqrt{\mu s}}\bigg[\left(\frac{1+\sqrt{\mu s}}{1-\sqrt{\mu s}}\right)(f(x_{k+1})-f(x^*))+\frac{1}{2L}\left(\frac{\sqrt{\mu s}}{1-\sqrt{\mu s}}\right)\|\nabla f(x_{k+1})\|^2\\
			  & +\frac{\mu}{2}\left(\frac{1}{1-\sqrt{\mu s}}\right)\|x_{k+1}-x^*\|^2-\left(\frac{1}{2}+\frac{1}{2}\beta+\frac{1-\beta}{\sqrt{\mu s}}\right)\left(\frac{1+\sqrt{\mu s}}{1-\sqrt{\mu s}}\right)s\|\nabla f(x_{k+1})\|^2\\
			  &+\|v_{k+1}\|^2 \bigg]\\
			  &\leq -\sqrt{\mu s}\left\lbrace \frac{1}{(1-\sqrt{\mu s})^2}\left[f(x_{k+1})-f(x^*)-\frac{(\beta-\beta^2)\mu s+(3+\beta^2-2\beta)\sqrt{\mu s}+2-2\beta}{2\sqrt{\mu s}}s\|\nabla f(x_{k+1})\|^2\right] \right\rbrace\\
			  &-\sqrt{\mu s}\left\lbrace\frac{\sqrt{\mu s}}{(1-\sqrt{\mu s})^2}\left[f(x_{k+1})-f(x^*)-\left(\frac{\beta^2 s\sqrt{\mu s}-(\beta^2-\beta)s}{2\sqrt{\mu s}}\right)\|\nabla f(x_{k+1})\|^2 \right]\right\rbrace\\
			  &-\sqrt{\mu s}\left\lbrace \frac{\mu}{2(1-\sqrt{\mu s})^2}\|x_{k+1}-x^*\|^2+\frac{1}{1-\sqrt{\mu s}}\|v_{k+1}\|^2\right\rbrace\\
			  &\leq -\sqrt{\mu s}\left\lbrace \frac{1}{(1-\sqrt{\mu s})^2}\left[1-2Ls\cdot\frac{(\beta-\beta^2)\mu s+(3+\beta^2-2\beta)\sqrt{\mu s}+2-2\beta}{2\sqrt{\mu s}}\right](f(x_{k+1})-f(x^*))\right\rbrace\\
			  &-\sqrt{\mu s}\left\lbrace\frac{\sqrt{\mu s}}{(1-\sqrt{\mu s})^2}\left[f(x_{k+1})-f(x^*)-\left(\frac{\beta^2 s\sqrt{\mu s}-(\beta^2-\beta)s}{2\sqrt{\mu s}}\right)\|\nabla f(x_{k+1})\|^2 \right]\right\rbrace\\
			  &-\sqrt{\mu s}\left\lbrace \frac{\mu}{2(1-\sqrt{\mu s})^2}\|x_{k+1}-x^*\|^2+\frac{1}{1-\sqrt{\mu s}}\|v_{k+1}\|^2\right\rbrace
	    \end{align*}
   \end{proof}
  \begin{Lemma}\label{recursive lemma}
  	Let $f\in \mathcal S_{\mu,L}^1(\mathbb R^n)$, $\mu\leq L$. Taking any step size $0<s\leq \frac{1}{4L}$, the discrete-time energy functional with $\lbrace x_k\rbrace_{k=0}^{\infty}$ generated by the discrete method satisfies 
  	\begin{equation*}
  		\mathcal E_\beta(k+1)-\mathcal E_\beta(k)\leq -\sqrt{\mu s}\min\left\lbrace \frac{1}{6},\frac{A_\beta}{B_\beta}\right\rbrace\mathcal E_\beta(k+1)
  	\end{equation*} 
  where $\begin{cases}
  	A_{\beta}=\frac{1}{(1-\sqrt{\mu s})^2}\left[1-2Ls\cdot\frac{(\beta-\beta^2)\mu s+(3+\beta^2-2\beta)\sqrt{\mu s}+2-2\beta}{2\sqrt{\mu s}}\right]\\
  	B_{\beta}=\frac{1}{1-\sqrt{\mu s}}+\frac{\beta^2 Ls}{2}
  \end{cases}$
\end{Lemma}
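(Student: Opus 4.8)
The plan is to read Lemma~\ref{recursive lemma} off from the two preceding estimates of this subsection: the Corollary, which bounds the one-step decrement $\mathcal E_\beta(k+1)-\mathcal E_\beta(k)$ from above, and the first lemma of this subsection, applied at index $k+1$, which bounds $\mathcal E_\beta(k+1)$ itself from above. Writing $F=f(x_{k+1})-f(x^*)\ge 0$, $V=\|v_{k+1}\|^2\ge 0$, $P=\|x_{k+1}-x^*\|^2\ge 0$ and the gradient-correction bracket
\[
\Gamma=F-\frac{\beta^2 s\sqrt{\mu s}-(\beta^2-\beta)s}{2\sqrt{\mu s}}\,\|\nabla f(x_{k+1})\|^2,
\]
those two estimates become $\mathcal E_\beta(k+1)-\mathcal E_\beta(k)\le -\sqrt{\mu s}\big(A_\beta F+\frac{\sqrt{\mu s}}{(1-\sqrt{\mu s})^2}\Gamma+\frac{\mu}{2(1-\sqrt{\mu s})^2}P+\frac{1}{1-\sqrt{\mu s}}V\big)$ and $\mathcal E_\beta(k+1)\le B_\beta F+\frac{1+\sqrt{\mu s}+\mu s}{(1-\sqrt{\mu s})^2}V+\frac{3\mu}{(1-\sqrt{\mu s})^2}P+\frac{\sqrt{\mu s}}{1-\sqrt{\mu s}}\Gamma$. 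So it suffices to exhibit $c>0$ for which the first parenthesis is at least $c$ times the right-hand side of the second estimate; then $\mathcal E_\beta(k+1)-\mathcal E_\beta(k)\le -\sqrt{\mu s}\,c\,\mathcal E_\beta(k+1)$ drops out by chaining the two bounds.

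I would produce such a $c$ by a four-way coefficient comparison, matching the $F$, $V$, $P$ and $\Gamma$ terms of the decrement bound against the corresponding terms of the energy bound. The decrement-to-energy coefficient ratios are $A_\beta/B_\beta$ (well defined since $B_\beta=\frac1{1-\sqrt{\mu s}}+\frac{\beta^2 Ls}{2}>0$) for $F$; $\frac{1-\sqrt{\mu s}}{1+\sqrt{\mu s}+\mu s}$ for $V$; $\frac{\mu/2}{3\mu}=\frac16$ for $P$; and $\frac{1}{1-\sqrt{\mu s}}$ for $\Gamma$. Since $s\le\frac1{4L}\le\frac1{4\mu}$ forces $\sqrt{\mu s}\le\frac12$, the $V$-ratio is at least $\frac{1/2}{7/4}=\frac27>\frac16$ and the $\Gamma$-ratio is at least $1>\frac16$; these two impose nothing, while the $P$-ratio pins down $\frac16$ and the $F$-ratio pins down $A_\beta/B_\beta$. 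Taking $c=\min\{\frac16,\,A_\beta/B_\beta\}$ therefore makes $c$ no larger than every one of the four ratios, so the four coefficient-wise inequalities hold and summing them yields the required domination --- \emph{as long as} each of $F$, $V$, $P$, $\Gamma$ is nonnegative. (When $A_\beta<0$, so that $c<0$, the $F$-comparison is an equality and everything still goes through formally; the bound is merely uninformative in that regime, consistent with the ``slow'' half of the main theorem.)

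The nonnegativity of $F$, $V$, $P$ is automatic, so the one genuine obstacle is showing $\Gamma\ge 0$: the subtracted coefficient equals $\frac s2\big(\beta^2+\frac{\beta(1-\beta)}{\sqrt{\mu s}}\big)\ge 0$ for $\beta\in[0,1]$, so $\Gamma$ is $F$ minus a nonnegative multiple of $\|\nabla f(x_{k+1})\|^2$. Here I would use $L$-smoothness in the form $\|\nabla f(x_{k+1})\|^2\le 2L\big(f(x_{k+1})-f(x^*)\big)$ to get $\Gamma\ge\big(1-Ls\beta^2-\frac{Ls\,\beta(1-\beta)}{\sqrt{\mu s}}\big)F$, and then check that the bracketed factor stays nonnegative on the admissible step-size window using $Ls\le\frac14$. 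If one prefers not to argue pointwise nonnegativity of $\Gamma$, the alternative is to substitute this lower bound for $\Gamma$ on the decrement side and the crude bound $\Gamma\le F$ on the energy side before comparing; this only perturbs the $F$-ratio, and one checks the substitution leaves $c=\min\{\frac16,\,A_\beta/B_\beta\}$ untouched. Beyond this point the proof is merely the verification of the handful of elementary scalar inequalities recorded above under $\mu\le L$ and $s\le\frac1{4L}$; the place where the step-size restriction is genuinely used is exactly in keeping the $V$-ratio and the gradient-correction bracket on the favourable side of $\frac16$.
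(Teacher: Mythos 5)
Your proposal is the paper's own argument: chain the Corollary's decrement bound with the first lemma of the subsection applied at index $k+1$, compare coefficients term by term, and note that the $V$-ratio $\frac{1-\sqrt{\mu s}}{1+\sqrt{\mu s}+\mu s}\ge\frac{2}{7}$ and the $\Gamma$-ratio $\frac{1}{1-\sqrt{\mu s}}>1$ both exceed $\frac16$, so the four-way minimum collapses to $\min\{\frac16,\,A_\beta/B_\beta\}$; the paper records exactly these four ratios before discarding two of them. You are also right that this termwise comparison silently requires each grouped quantity, in particular the bracket $\Gamma$, to be nonnegative, since its decrement-side coefficient strictly exceeds its energy-side coefficient; the paper does not address this point at all.

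The genuine problem is the step you defer. From $\|\nabla f(x_{k+1})\|^2\le 2L(f(x_{k+1})-f(x^*))$ you get $\Gamma\ge\bigl(1-Ls\beta^2-\frac{Ls\,\beta(1-\beta)}{\sqrt{\mu s}}\bigr)F$, but $\frac{Ls}{\sqrt{\mu s}}=\sqrt{Ls}\cdot\sqrt{L/\mu}$ is not controlled by $Ls\le\frac14$ alone: at $s=\frac{1}{4L}$ and $\beta=\frac12$ the subtracted term is $\frac18\sqrt{L/\mu}$, so the factor is negative once $L/\mu$ exceeds roughly $57$, and for $f(x)=\frac{L}{2}\|x\|^2$ (where $\|\nabla f\|^2=2L(f-f^*)$ holds with equality) $\Gamma$ itself is then genuinely negative. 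So the ``check'' you postpone fails, and your fallback of replacing $\Gamma$ by this lower bound on the decrement side and by $F$ on the energy side perturbs the $F$-ratio away from $A_\beta/B_\beta$ in a way you have not verified and which in general changes the constant. To be fair, this is a hole you inherited from the paper rather than created --- but having correctly isolated it as the one nontrivial point, you need either an additional hypothesis under which $\Gamma\ge0$ (e.g.\ $\beta\in\{0,1\}$, where the $\frac{\beta(1-\beta)}{\sqrt{\mu s}}$ term vanishes and the factor is $1-Ls\beta^2\ge\frac34$, or a step size small enough relative to $\mu/L^2$) or a regrouped comparison that treats the $\|\nabla f(x_{k+1})\|^2$ term on its own.
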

  \begin{proof}
  	Notice that by the previous lemma, 
  	\begin{align*}
  		&\mathcal E_\beta(k+1)-\mathcal E_\beta(k)\\
  		 &\leq -\sqrt{\mu s}\left\lbrace\underbrace{\frac{1}{(1-\sqrt{\mu s})^2}\left[1-2Ls\cdot\frac{(\beta-\beta^2)\mu s+(3+\beta^2-2\beta)\sqrt{\mu s}+2-2\beta}{2\sqrt{\mu s}}\right]}_{:=A_\beta}(f(x_{k+1})-f(x^*))\right\rbrace\\
			  &-\sqrt{\mu s}\left\lbrace\frac{\sqrt{\mu s}}{(1-\sqrt{\mu s})^2}\left[f(x_{k+1})-f(x^*)-\left(\frac{\beta^2 s\sqrt{\mu s}-(\beta^2-\beta)s}{2\sqrt{\mu s}}\right)\|\nabla f(x_{k+1})\|^2 \right]\right\rbrace\\
			  &-\sqrt{\mu s}\left\lbrace \frac{\mu}{2(1-\sqrt{\mu s})^2}\|x_{k+1}-x^*\|^2+\frac{1}{1-\sqrt{\mu s}}\|v_{k+1}\|^2\right\rbrace  	
	   \end{align*}
	On the other hand, we have 
  \begin{align*}
  		\mathcal E_\beta(k)&\leq\underbrace{\left(\frac{1}{1-\sqrt{\mu s}}+\frac{\beta^2 Ls}{2}\right)}_{:=B_\beta}(f(x_k)-f(x^*))+\frac{1+\sqrt{\mu s}+\mu s}{(1-\sqrt{\mu s})^2}\|v_k\|^2\\
  		&+\frac{3\mu}{(1-\sqrt{\mu s})^2}\|x_{k}-x^*\|^2+\frac{\sqrt{\mu s}}{1-\sqrt{\mu s}}\left[f(x_k)-f(x^*)-\left(\frac{\beta^2 s\sqrt{\mu s}-(\beta^2-\beta )s}{2\sqrt{\mu s}}\right)\|\nabla f(x_k)\|^2 \right]
  	\end{align*}
By comparing the coefficients,
\begin{align*}
	\mathcal E_\beta(k+1)-\mathcal E_\beta(k) &\leq -\sqrt{\mu s}\min\left\lbrace\frac{1}{1-\sqrt{\mu s}},\frac{1}{6},\frac{1-\sqrt{\mu s}}{1+\sqrt{\mu s}+\mu s},\frac{A_\beta}{B_\beta} \right\rbrace\mathcal E_\beta(k+1)\\
	&=-\sqrt{\mu s}\min\left\lbrace \frac{1}{6},\frac{A_\beta}{B_\beta}\right\rbrace\mathcal E_\beta(k+1)
\end{align*}
since $\frac{1}{1-\sqrt{\mu s}}>1>\frac{1}{6}$ and $\frac{1-\sqrt{\mu s}}{1+\sqrt{\mu s}+\mu s}>\frac{2}{7}>\frac{1}{6}$. 
\end{proof}
\begin{Lemma}\label{Comparison Lemma}
	When $\frac{25\mu}{(12L-\mu)^2} \leq s\leq\frac{1}{4L}$, there exists a $\beta_c\in [0,1]$ depending on $\mu,s,L$ such that 
	\begin{equation*}
	    \begin{cases}
			\frac{A_\beta}{B_\beta}\leq \frac{1}{6} &\text{when $0\leq \beta\leq \beta_c$}\\
			\frac{A_\beta}{B_\beta}>\frac{1}{6} &\text{when $\beta_c<\beta\leq 1$}
		\end{cases}
	\end{equation*}
\end{Lemma}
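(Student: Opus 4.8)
The plan is to turn the two‑sided comparison between $A_\beta/B_\beta$ and $\frac16$ into a sign question for the single polynomial $g(\beta):=6A_\beta-B_\beta$, to observe that $g$ is a strictly concave quadratic in $\beta$, and then to determine the sign of $g$ at the two endpoints $\beta=0$ and $\beta=1$ using, respectively, the lower and the upper bound on the step size. Strict concavity together with $g(0)\le 0<g(1)$ then forces a single crossing, which is $\beta_c$.

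First I would note that $B_\beta=\frac{1}{1-\sqrt{\mu s}}+\frac{\beta^2 Ls}{2}>0$ for every $\beta$, since $\sqrt{\mu s}\le\sqrt{Ls}\le\frac12<1$ (using $\mu\le L$ and $s\le\frac1{4L}$); hence $\frac{A_\beta}{B_\beta}\le\frac16\iff g(\beta)\le 0$ and likewise with the strict reversed inequalities, so only $\operatorname{sign} g$ on $[0,1]$ matters. Next I would expand the bracket in $A_\beta$ in powers of $\beta$: with $r:=\sqrt{\mu s}$ one has $(\beta-\beta^2)\mu s+(3+\beta^2-2\beta)\sqrt{\mu s}+2-2\beta=r(1-r)\beta^2+(r^2-2r-2)\beta+(3r+2)$, so $A_\beta=\frac{1}{(1-r)^2}\bigl[1-\frac{Ls}{r}\bigl(r(1-r)\beta^2+(r^2-2r-2)\beta+(3r+2)\bigr)\bigr]$, whose $\beta^2$-coefficient is $-\frac{Ls}{1-r}$. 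Thus the $\beta^2$-coefficient of $g$ is $-\frac{6Ls}{1-r}-\frac{Ls}{2}<0$, so $g$ is a genuine strictly concave quadratic; its zero set is therefore either empty or a closed interval $[\beta_-,\beta_+]$, with $g>0$ on $(\beta_-,\beta_+)$ and $g<0$ outside $[\beta_-,\beta_+]$.

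For the left endpoint I would use $A_0=\frac{1}{(1-r)^2}\bigl(1-3Ls-\frac{2Ls}{r}\bigr)$ and $B_0=\frac1{1-r}$; clearing $(1-r)^2>0$ gives $g(0)\le0\iff 5+r\le 18Ls+\frac{12Ls}{r}$, and since $18Ls\ge0$ it suffices that $5+r\le\frac{12Ls}{r}$, i.e.\ $\mu s+5\sqrt{\mu s}\le 12Ls$; dividing by $\sqrt s$ (legitimate since $12L-\mu\ge 11L>0$) and squaring, this is exactly the hypothesis $s\ge\frac{25\mu}{(12L-\mu)^2}$, so $g(0)\le0$. For the right endpoint the bracket in $A_\beta$ collapses to $2r$ at $\beta=1$, giving $A_1=\frac{1-2Ls}{(1-r)^2}$ and $B_1=\frac1{1-r}+\frac{Ls}{2}$; with $Ls\le\frac14$ and $r\le\frac12$ I would bound $6A_1\ge 6(1-2Ls)\ge 3$ and $B_1\le 2+\frac18<3$, so $g(1)>0$ with room to spare (hence $\beta_c<1$, matching $\beta_c\in[0,1)$ in Theorem~\ref{Main Theorem}). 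Finally, from strict concavity with $g(0)\le0<g(1)$ I would conclude: if $g(0)=0$ take $\beta_c=0$ (the other root of $g$ then exceeds $1$, so $g>0$ on $(0,1]$); if $g(0)<0$, then $g$ has two distinct real roots $\beta_-<\beta_+$ with $0<\beta_-<1<\beta_+$, and $\beta_c:=\beta_-$ works. In both cases $g\le0$ on $[0,\beta_c]$ and $g>0$ on $(\beta_c,1]$, i.e.\ $A_\beta/B_\beta\le\frac16$ respectively $>\frac16$ there; the explicit value of $\beta_c$, the smaller root of $g$, is what Remark~\ref{Critical Value} records.

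The hard part is just the left endpoint: one must recognize that the clean sufficient condition $\mu s+5\sqrt{\mu s}\le 12Ls$, obtained by discarding the harmless term $18Ls$, rearranges \emph{precisely} into the stated lower bound $s\ge\frac{25\mu}{(12L-\mu)^2}$ — this is what fixes the constant $25$. The strict concavity of $g$ and the crude bound at $\beta=1$ are routine, and the single‑crossing conclusion is then immediate.
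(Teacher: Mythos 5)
Your proof is correct, and it rests on the same reduction as the paper---turn the comparison of $A_\beta/B_\beta$ with $\tfrac16$ into a sign question for a single quadratic in $\beta$ and evaluate it at the two endpoints, using the lower bound on $s$ at $\beta=0$ and the upper bound at $\beta=1$---but the mechanism for excluding a double crossing on $[0,1]$ is genuinely different. The paper works with $h(\beta)=(\text{numerator})-\tfrac16(\text{denominator})$ and proves that $h$ is \emph{monotone increasing}: it computes $h'$, observes that $h'$ is decreasing in $\beta$, and bounds $h'(1)\ge L\mu s^2+2Ls-\tfrac1{12}Ls\ge 0$. You instead invoke only the \emph{strict concavity} of $g=6A_\beta-B_\beta$, read off from the negative leading coefficient $-\tfrac{6Ls}{1-r}-\tfrac{Ls}{2}$; combined with $g(0)\le 0<g(1)$ this gives the single crossing with no derivative estimate at all, which is a cheaper and more robust route (one sign check versus the extra bound on $h'(1)$). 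You also supply a detail the paper elides: where the paper asserts $h(0)\le 0$ ``is easy to see,'' you show that after discarding the nonnegative $18Ls$ term the condition reduces to $\mu s+5\sqrt{\mu s}\le 12Ls$, which rearranges exactly into $s\ge\frac{25\mu}{(12L-\mu)^2}$, explaining the constant $25$. Your endpoint values $A_0$, $B_0$, $A_1=\frac{1-2Ls}{(1-\sqrt{\mu s})^2}$, $B_1$ and the identification of $\beta_c$ as the smaller root of $g$ all agree with the paper and with Remark~\ref{Critical Value}. One trivial wording slip: the ``zero set'' of a strictly concave quadratic is the two-point set $\{\beta_-,\beta_+\}$, not the interval; you clearly mean the superlevel set $\{g\ge 0\}$, and the rest of your sentence makes that unambiguous.
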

\begin{proof}
	For general $\beta\in [0,1]$,
	\begin{align*}
		\frac{A_\beta}{B_\beta} &=\frac{1-Ls\cdot\frac{(\beta-\beta^2)\mu s+(3+\beta^2-2\beta)\sqrt{\mu s}+2-2\beta }{\sqrt{\mu s}}}{(1-\sqrt{\mu s})^2\left(\frac{1}{1-\sqrt{\mu s}}+\frac{\beta^2 Ls}{2} \right)}\\
		&=\frac{(L\mu s^2-Ls\sqrt{\mu s})\beta^2+(2Ls\sqrt{\mu s}-L\mu s^2+2Ls)\beta+(\sqrt{\mu s}-3Ls\sqrt{\mu s}-2Ls)}{\frac{Ls}{2}\sqrt{\mu s}(1-\sqrt{\mu s})^2\beta^2+\sqrt{\mu s}-\mu s}
	\end{align*}
	To compare $\frac{A_\beta}{B_\beta}$ with $\frac{1}{6}$, we only need to compare the function $h(\beta)$ with $0$ where 
	\begin{align*}
		h(\beta) &=(L\mu s^2-Ls\sqrt{\mu s})\beta^2+(2Ls\sqrt{\mu s}-L\mu s^2+2Ls)\beta+(\sqrt{\mu s}-3Ls\sqrt{\mu s}-2Ls)\\
		&\ \ -\frac{1}{6}\left\lbrace \frac{Ls}{2}\sqrt{\mu s}(1-\sqrt{\mu s})^2\beta^2+\sqrt{\mu s}-\mu s \right\rbrace
	\end{align*}
	First, it is easy to see that 
	\begin{equation*}
		h(0)=\frac{5}{6}\sqrt{\mu s}-3Ls\sqrt{\mu s}+\frac{1}{6}\mu s-2Ls\leq 0
	\end{equation*}
	and 
	\begin{equation*}
		h(1)=\frac{1-2Ls}{1-\sqrt{\mu s}+\frac{Ls}{2}(1-\sqrt{\mu s})^2}\geq 0
	\end{equation*}
	when $\frac{25\mu}{(12L-\mu)^2} \leq s\leq\frac{1}{4L}$. Secondly, 
	\begin{equation*}
		h'(\beta)=Ls\sqrt{\mu s}\left[2(\sqrt{\mu s}-1)-\frac{1}{6}(1-\sqrt{\mu s})^2 \right]\beta+2Ls\sqrt{\mu s}-L\mu s^2+2Ls
	\end{equation*}
	which is a monotone decreasing function on $[0,1]$. Hence,
	\begin{equation*}
		h'(\beta)\geq h'(1)=L\mu s^2+2Ls-\frac{1}{6}Ls\sqrt{\mu s}(1-\sqrt{\mu s})^2\geq L\mu s^2+2Ls-\frac{1}{12}Ls\geq 0
	\end{equation*} 
	Therefore, $h'(\beta)\geq 0$ for all $0\leq \beta\leq 1$. This completes the proof. 
\end{proof}
\begin{Rem}\label{Critical Value}
	The $\beta_c$ in the Lemma above is computable, 
	\begin{align*}
		\beta_c=\frac{-\mathbf{b}-\sqrt{\mathbf{b}^2-4\mathbf a\mathbf c}}{2\mathbf a}
	\end{align*}
	where 
	\begin{align*}
		\mathbf a &=Ls\sqrt{\mu s}(\sqrt{\mu s}-1)\left[1-\frac{1}{12}(\sqrt{\mu s}-1)\right]\\
		\mathbf b &=Ls(2\sqrt{\mu s}-\mu s+2)\\
		\mathbf c &=\frac{5}{6}\sqrt{\mu s}-3Ls\sqrt{\mu s}-2Ls+\frac{1}{6}\mu s
	\end{align*}
\end{Rem}
\begin{Cor}\label{Inductive difference of energy functional}
	Suppose $\frac{25\mu}{(12L-\mu)^2}\leq s\leq\frac{1}{4L}$. When $0\leq \beta\leq \beta_c$,
	\begin{align*}
		\mathcal E_\beta(k+1)-\mathcal E_\beta(k)\leq -\frac{\sqrt{\mu s}-Ls\left[(\beta-\beta^2)\mu s+(3+\beta^2-2\beta)\sqrt{\mu s}+2-2\beta\right]}{\sqrt{\mu s}\left(1-\sqrt{\mu s}+\frac{\beta^2 Ls}{2}(1-\sqrt{\mu s})^2\right)}\mathcal E_\beta(k)
	\end{align*}
	When $\beta_c\leq \beta\leq 1$, 
	\begin{equation*}
		\mathcal E_\beta(k+1)-\mathcal E_\beta(k)\leq -\frac{\sqrt{\mu s}}{6}\mathcal E_\beta(k)
	\end{equation*}
	$(2)$
\end{Cor}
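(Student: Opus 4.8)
The plan is to read this Corollary off from Lemma \ref{recursive lemma} once the minimum there has been resolved by Lemma \ref{Comparison Lemma} and the quotient $A_\beta/B_\beta$ has been put in closed form. Writing $P_\beta:=(\beta-\beta^2)\mu s+(3+\beta^2-2\beta)\sqrt{\mu s}+2-2\beta$ for brevity, Lemma \ref{recursive lemma} gives, for $0<s\le 1/(4L)$,
\[
\mathcal E_\beta(k+1)-\mathcal E_\beta(k)\le -\sqrt{\mu s}\,\min\{\tfrac16,\ A_\beta/B_\beta\}\,\mathcal E_\beta(k+1),
\]
where $A_\beta=\tfrac{1}{(1-\sqrt{\mu s})^2}\bigl(1-\tfrac{Ls\,P_\beta}{\sqrt{\mu s}}\bigr)=\tfrac{\sqrt{\mu s}-Ls\,P_\beta}{\sqrt{\mu s}(1-\sqrt{\mu s})^2}$ and $B_\beta=\tfrac{1}{1-\sqrt{\mu s}}+\tfrac{\beta^2Ls}{2}$. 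So the entire task is (i) to decide which of $\tfrac16$ and $A_\beta/B_\beta$ is smaller, and (ii) to simplify $A_\beta/B_\beta$.

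First I would dispose of the range $\beta_c\le\beta\le1$: Lemma \ref{Comparison Lemma}, whose hypothesis $\tfrac{25\mu}{(12L-\mu)^2}\le s\le\tfrac1{4L}$ is exactly our standing assumption, gives $A_\beta/B_\beta>\tfrac16$ there (with equality at $\beta_c$), so $\min\{\tfrac16,A_\beta/B_\beta\}=\tfrac16$ and the displayed inequality of Lemma \ref{recursive lemma} is already the second assertion. For $0\le\beta\le\beta_c$ the same lemma gives $\min\{\tfrac16,A_\beta/B_\beta\}=A_\beta/B_\beta$, so it remains to combine the two expressions above: dividing $A_\beta$ by $B_\beta$ and cancelling a factor $(1-\sqrt{\mu s})^2$ from the resulting compound denominator yields
\[
\frac{A_\beta}{B_\beta}=\frac{\sqrt{\mu s}-Ls\,P_\beta}{\sqrt{\mu s}\bigl(1-\sqrt{\mu s}+\tfrac{\beta^2Ls}{2}(1-\sqrt{\mu s})^2\bigr)},
\]
which is exactly the coefficient appearing in the first assertion; substituting it back into Lemma \ref{recursive lemma} closes this branch.

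The one point still to tidy up is that Lemma \ref{recursive lemma} carries $\mathcal E_\beta(k+1)$, not $\mathcal E_\beta(k)$, on the right, and that one then wants a contraction statement to iterate in Theorem \ref{Main Theorem}. For this I would first check $\mathcal E_\beta(k)\ge0$: the only negative piece of $(\ref{beta discrete-time energy functional})$ is $-\tfrac{\beta s\|\nabla f(x_k)\|^2}{2(1-\sqrt{\mu s})}$, and by $\|\nabla f(x_k)\|^2\le 2L(f(x_k)-f(x^*))$ it is absorbed by the term $\tfrac{1+\sqrt{\mu s}}{1-\sqrt{\mu s}}(f(x_k)-f(x^*))$ as soon as $\beta Ls\le1+\sqrt{\mu s}$, which holds since $\beta\le1$ and $Ls\le\tfrac14$. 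Then, writing $\rho_\beta$ for the coefficient $\sqrt{\mu s}\min\{\tfrac16,A_\beta/B_\beta\}$ produced above, $\mathcal E_\beta(k+1)-\mathcal E_\beta(k)\le-\rho_\beta\mathcal E_\beta(k+1)$ rearranges to $\mathcal E_\beta(k+1)\le(1+\rho_\beta)^{-1}\mathcal E_\beta(k)$, the form that is actually iterated. I expect the only real work to be the bookkeeping: the cancellation that turns $A_\beta/B_\beta$ into the stated quotient is long and sign-sensitive, and throughout one must keep the standing hypotheses $0\le\beta\le1$, $\mu\le L$ and $\mu s\le\mu/L\le\tfrac14$ in force so that $1-\sqrt{\mu s}$, $B_\beta$ and the full denominator stay strictly positive and Lemma \ref{Comparison Lemma} is applicable.
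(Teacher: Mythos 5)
Your proposal takes exactly the route the paper intends: Corollary \ref{Inductive difference of energy functional} is meant to follow by resolving the minimum in Lemma \ref{recursive lemma} via the sign information of Lemma \ref{Comparison Lemma} and writing $A_\beta/B_\beta$ in closed form. Your algebra is right --- with $P_\beta=(\beta-\beta^2)\mu s+(3+\beta^2-2\beta)\sqrt{\mu s}+2-2\beta$ one indeed gets $A_\beta/B_\beta=\bigl(\sqrt{\mu s}-LsP_\beta\bigr)\big/\bigl(\sqrt{\mu s}\,(1-\sqrt{\mu s}+\tfrac{\beta^2Ls}{2}(1-\sqrt{\mu s})^2)\bigr)$, which is the displayed fraction --- and your case split at $\beta_c$ is the correct use of Lemma \ref{Comparison Lemma}. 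The paper's own proof is the single line ``trivially from Lemma \ref{Comparison Lemma}'', so you have supplied strictly more detail, including the positivity of $\mathcal E_\beta(k)$ (absorbing the negative term of $(\ref{beta discrete-time energy functional})$ via $\|\nabla f(x_k)\|^2\le 2L(f(x_k)-f(x^*))$ and $\beta Ls\le 1+\sqrt{\mu s}$), which the paper never checks.

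One caveat, which your careful bookkeeping exposes rather than creates: what Lemma \ref{recursive lemma} actually delivers in the subcritical branch is $\mathcal E_\beta(k+1)-\mathcal E_\beta(k)\le-\sqrt{\mu s}\,(A_\beta/B_\beta)\,\mathcal E_\beta(k+1)$, i.e.\ with an extra factor $\sqrt{\mu s}$ and with $\mathcal E_\beta(k+1)$ on the right, whereas the Corollary prints $-(A_\beta/B_\beta)\,\mathcal E_\beta(k)$. Since $0\le\mathcal E_\beta(k+1)\le\mathcal E_\beta(k)$ and $\sqrt{\mu s}<1$, the printed inequality is strictly \emph{stronger} than what the lemma supports, so ``substituting back into Lemma \ref{recursive lemma}'' does not literally close the branch; the same mismatch occurs in the supercritical branch ($\mathcal E_\beta(k)$ versus $\mathcal E_\beta(k+1)$). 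Your rearrangement to $\mathcal E_\beta(k+1)\le(1+\rho_\beta)^{-1}\mathcal E_\beta(k)$ with $\rho_\beta=\sqrt{\mu s}\min\{\tfrac16,A_\beta/B_\beta\}$ is the honest conclusion and is the form actually iterated in the proof of Theorem \ref{Main Theorem} (which itself uses $1+A_\beta/B_\beta$ without the $\sqrt{\mu s}$, a further internal inconsistency of the paper). So: same approach, correctly executed, but be explicit that you are proving the $(1+\rho_\beta)^{-1}$ contraction rather than the inequality as printed.
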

\begin{proof}
		Trivially from Lemma $\ref{Comparison Lemma}$. 
\end{proof}
\subsection{Proof of Main Results}
\begin{proof}[Proof of Theorem \ref{Main Theorem}]
	Notice that 
	\begin{align*}
		\mathcal E_\beta(k)\geq \frac{1+\sqrt{\mu/(cL)}}{1-\sqrt{\mu/(cL)}}\left(f(x_k)-f(x^*) \right)-\frac{\beta\|\nabla f(x_k)\|^2}{(2cL)(1-\sqrt{\mu/(cL)})}
	\end{align*}
	Together with 
	\begin{align*}
		f(x_k)-f(x^*)\geq \frac{1}{2L}\|\nabla f(x_k)\|^2
	\end{align*}
	we get 
	\begin{align*}
		\mathcal E_\beta(k)\geq \frac{1+\sqrt{\mu/(cL)}}{1-\sqrt{\mu/(cL)}}\left(f(x_k)-f(x^*) \right)-\frac{\beta\left(f(x_k)-f(x^*)\right)}{c(1-\sqrt{\mu/(cL)})}
	\end{align*}
	Equivalently, 
	\begin{align*}
		f(x_k)-f(x^*)\leq \frac{c+c\sqrt{\mu/(cL)}-\beta}{c\left(1-\sqrt{\mu/(cL)} \right)} \cdot\mathcal E_\beta(k)
	\end{align*}
	Applying Corollary $\ref{Inductive difference of energy functional}$ inductively and plugging in $s=\frac{1}{cL}$ gives us 
	\begin{equation*}
		\mathcal E_\beta(k)\leq\frac{\mathcal E_\beta(0)}{\left\lbrace 1+\frac{\sqrt{\frac{\mu}{cL}}-\frac{1}{c}\left[(\beta-\beta^2)\frac{\mu}{cL}+(3+\beta^2-2\beta)\sqrt{\frac{\mu}{cL}}+2-2\beta \right]}{\sqrt{\frac{\mu}{cL}}\left[1-\sqrt{\frac{\mu}{cL}}+\frac{\beta^2}{2c}\left(1-\sqrt{\frac{\mu}{cL}} \right)^2 \right]} \right\rbrace^k}
	\end{equation*} 
	Recall that the initial velocity $v_0=-\frac{2\sqrt{s}\nabla f(x_0)}{1+\sqrt{\mu s}}$, hence 
	\begin{align*}
	 \mathcal E_\beta(0)\leq &\frac{1+\sqrt{\mu s}}{1-\sqrt{\mu s}}\left(f(x_0)-f(x^*)\right)+\frac{s}{(1+\sqrt{\mu s})^2}\|\nabla f(x_0)\|^2\\
	 &+\frac{1}{4}\left\|\frac{2\sqrt{\mu}}{1-\sqrt{\mu s}}(x_0-x^*)-\left(\frac{2-\beta-\beta\sqrt{\mu s}}{1+\sqrt{\mu s}}\right)\cdot\sqrt{s}\cdot\nabla f(x_0)\right\|^2\\
	 &\leq \left[\frac{1}{2}\left(\frac{1+\sqrt{\mu s}}{1-\sqrt{\mu s}}\right)+\frac{Ls}{(1+\sqrt{\mu s})^2}+\frac{2\mu/L}{(1-\sqrt{\mu s})^2}+\frac{Ls}{2}\left(\frac{2-\beta-\beta\sqrt{\mu s}}{1+\sqrt{\mu s}}\right)^2\right]L\|x_0-x^*\|^2\\
	 &=C_{\beta,\mu,L}\cdot L\cdot \|x_0-x^*\|^2
	\end{align*}
	where 
	\begin{align*}
		C_{\beta,\mu,L}=\left[\frac{1+\sqrt{\mu/(sL)}}{2-2\sqrt{\mu/(sL)}}+\frac{1}{4(1+\sqrt{\mu/(sL)})^2}+\frac{2\mu/L}{(1-\sqrt{\mu/(sL)})^2}+\frac{1}{2c}\left(\frac{2-\beta-\beta\sqrt{\mu s}}{1+\sqrt{\mu s}}\right)^2 \right]
	\end{align*}
	since we write $s=\frac{1}{cL}$. Let 
	\begin{equation*}
		C'_{\beta,\mu,L}=\frac{c+c\sqrt{\mu/(cL)}-\beta}{c\left(1-\sqrt{\mu/(cL)} \right)}\cdot C_{\beta,\mu,L}
	\end{equation*}
	and we conclude that 
	\begin{align*}
		f(x_k)-f(x^*)&\leq \frac{C'_{\beta,\mu,L}\cdot L\cdot \|x_0-x^*\|^2}{\left\lbrace 1+\frac{\sqrt{\frac{\mu}{cL}}-\frac{1}{c}\left[(\beta-\beta^2)\frac{\mu}{cL}+(3+\beta^2-2\beta)\sqrt{\frac{\mu}{cL}}+2-2\beta \right]}{\sqrt{\frac{\mu}{cL}}\left[1-\sqrt{\frac{\mu}{cL}}+\frac{\beta^2}{2c}\left(1-\sqrt{\frac{\mu}{cL}} \right)^2 \right]} \right\rbrace^k}\\
		&=\frac{C'_{\beta,\mu,L}\cdot L\cdot\|x_0-x^*\|^2 }{\left\lbrace 1+\frac{\frac{\beta^2-\beta}{c^2}\left(\frac{\mu}{L}\right)+\left(\frac{1}{\sqrt{c}}-\frac{3+\beta^2-2\beta}{c\sqrt{c}}\right)\sqrt{\frac{\mu}{L}}-\frac{2-2\beta}{c}}{\frac{\beta^2}{2c^2\sqrt{c}}\left(\frac{\mu}{L}\right)^{3/2}-\left(\frac{1}{c}+\frac{\beta^2}{c^2}\right)\frac{\mu}{L}+\left(\frac{1}{\sqrt{c}}+\frac{\beta^2}{2c\sqrt{c}}\right)\sqrt{\frac{\mu}{L}} }\right\rbrace^k}
	\end{align*}
	This completes the proof of the subcritical regime. The supercritical regime follows directly from Corollary $\ref{Inductive difference of energy functional}$ and \cite{HRODE} Theorem $3$. 
\end{proof}
\begin{Rem}
	As $\beta$ reaches to $0$, since $\frac{A_0}{B_0}\leq\frac{1}{6}$ if $\frac{25\mu}{(12L-\mu)^2} \leq s\leq \frac{1}{4L}$, we have to choose a step size smaller than $\frac{25\mu}{(12L-\mu)^2}$ in order to let $\frac{A_0}{B_0}>\frac{1}{6}$. For instance, $\mu=\frac{\mu}{16L^2}$ works here. This matches the $(\ref{rate of convergence for heavy ball method})$ and gives a different reasoning than the one stated in \cite{HRODE} of why we need a more conservative step size on Heavy ball method.
\end{Rem}

\end{document}